\newtheorem{theorem}{Theorem}[section]
\newtheorem{proposition}[theorem]{Proposition}
\newtheorem{lemma}[theorem]{Lemma}
\newtheorem{example}[theorem]{Example}
\newtheorem{definition}[theorem]{Definition}
\newtheorem*{question*}{Question} 
\newtheorem*{problem*}{Problem} 
\newtheorem{algorithm}[theorem]{Algorithm}
\newtheorem{remark}[theorem]{Remark}
\newtheorem{hyp}[theorem]{Hypotheses}
\numberwithin{equation}{section}
\newcommand\paren[1]{\left(#1\right)}
\newcommand\llave[1]{\left\{#1\right\}}
\DeclareMathOperator\vol{vol}
\newcommand\dto{\dashrightarrow}
\newcommand\lto{\longrightarrow}
\newcommand\nto{\stackrel}
\def\NN{\mathbb{N}}
\def\ZZ{\mathbb{Z}}
\def\kk{\mathbb{K}}
\def\PP{\mathbb{P}}
\def\QQ{\mathbb{Q}}
\def\CC{\mathbb{C}}
\def\RR{\mathbb{R}}
\def\AA{\mathbb{A}}
\def\Cc{\mathscr{C}}
\def\Sc{\mathscr{S}}
\def\Tc{\mathscr{T}}
\def\Nc{\mathcal{N}}
\def\Ic{\mathcal I}
\def\Z.{\mathcal{Z}_\bullet}
\def\M.{\mathcal{M}_\bullet}
\def\mm{\mathfrak{m}}
\def\Fitt0{\mathfrak{F}}
\def\deg{\mathrm{deg}}
\def\im{\mathrm{im}}
\def\Res{\mathrm{Res}}
\def\Syz{\mathrm{Syz}}
\def\dim{\mathrm{dim}}
\newcommand\SIR{\mathrm{Sym}_A (I)}
\newcommand\RIR{\mathrm{Rees}_A (I)}
\title[Implicitization via syzygies]
{Implicitization of rational hypersurfaces via linear syzygies: a practical overview}
\author{Nicol\'as Botbol and Alicia Dickenstein}
\address{Departamento de Matem\'atica,
FCEN, Univ. de Buenos Aires, and IMAS-CONICET, Buenos Aires, Argentina.}
\email{{alidick,nbotbol}@dm.uba.ar}
\thanks{Partially supported by UBACYT 20020100100242, CONICET PIP 112-200801-00483 
and ANPCyT PICT 2013-1110, Argentina}
\date{\today}
\begin{document}

\begin{abstract}
We unveil in concrete terms the general machinery of the 
syzygy-based algorithms for the implicitization of
rational surfaces in terms of the monomials in the 
polynomials defining the parametrization, following and
expanding our joint article with M. Dohm.
These algebraic techniques, based on the theory of
approximation complexes due to J. Herzog, A, Simis and W.
Vasconcelos, were  introduced for the implicitization
problem by J.-P. Jouanolou, L. Bus\'e, and M. Chardin. Their work was inspired by the
practical method of moving curves, proposed by T. Sederberg and
F. Chen, translated into the language of syzygies by D. Cox. Our aim is to express
the theoretical results and resulting algorithms into very concrete terms,
avoiding the use of the advanced homological commutative algebra tools which are needed
for their proofs. 
\end{abstract}

\maketitle


\section{Introduction}\label{sec:intro}

Let $\kk$ be a field. We can assume $\kk=\QQ$ (or any computable field) 
when dealing with implementations. 
All the varieties, rings and vector spaces we will consider are 
understood to be taken over $\kk$. Consider a rational parametrization
\begin{eqnarray}\label{eq:f}
 \kk^2 & \stackrel{f}{\dashrightarrow} & \kk^3 \notag \\
s =(s_1,s_2) & \mapsto & \left(\frac{f_1(s)}{f_0(s)},
\frac{f_2(s)}{f_0(s)},\frac{f_3(s)}{f_0(s)}\right)
\end{eqnarray}
of a (hyper)surface $\Sc := (F=0)\subset \AA^3$, where $F \in\kk[T_1,T_2,T_3]$ 
is a non-constant polynomial and $f_i \in \kk[s_1,s_2]$. 
(As usual, the dashed arrow means that $f$ is defined on a dense open set of $\kk^2$.)
 An important problem in
 computer aided geometric design is to  switch from parametric to implicit 
representations of rational surfaces \cite{ho89}, that is the \emph{parametrization} $f$ is 
assumed to be known and one seeks for 
the 
\emph{implicit equation} $F$ (which is defined  only up to multiplicative
constant).
In fact, we will assume that $f$ is given and 
our aim will not be to get the implicit equation $F$ of $\Sc$ written in terms of 
its monomials, but a 
\emph{matrix representation} of the surface.

\begin{definition}\label{def:MatRep}
A matrix representation $M$ of $\Sc$ is a matrix with entries in $\kk[T_1,T_2,T_3]$, 
generically of full rank, which verifies the following condition: for any point $p \in \kk^3$, 
the rank of $M(p)$ drops if and only if $p$ lies on $\Sc$.
\end{definition}

The use of matrix representations goes back to Manocha and Canny~\cite{MC91}, and to
Chionh and Goldman~\cite{CG92}. Having the matrix $M$ is 
sufficiently good for many 
purposes and it is cheaper to compute. The well-developed theory and tools of linear algebra can be applied 
to solve geometric problems. We can certainly use the (numerical) 
rank dropping condition in Definition~\ref{def:MatRep}
to check membership in $\Sc$, and, moreover, the whole structure of minors of $M$ is related to the
singularities of the parametrization \cite{BBC13} and gives a way to invert it when the fiber has
a single point  \cite{buse13,BBC13}.
Matrix representations are also useful for solving intersection problems
as is shown in \cite{ACGS07,BT09,DFGS13,buse13}. Much of the computational
difficulty in these problems lies on computing ranks for polynomial
matrices (cf.\ \cite{HS98} as well as Section~5 in the nice and interesting paper~\cite{buse13}).

\medskip

The motivation for this paper is to present in the simplest possible 
terms procedures for the implicitization of
rational surfaces via matrix representations, based on the syzygies $(h_0, \dots, h_3)$ of the 
input polynomials, that is, 4-tuples of polynomials
in the $s$ variables verifying the linear relation $\sum_{i=0}^3 h_i f_i=0$. 
The theoretical justification is not naive and 
requires a good command of techniques of (homological) commutative algebra. 
However, the algorithms do not require a heavy background and are
easy to explain. We will show that they perform very well, and moreover, 
they work even better in the presence
of base points. 

Call $T_1, T_2, T_3$ the coordinates in the target of $f$.
Our question is an instance of elimination of variables, where we want to 
find the algebraic relations among the variables $T_1, T_2, T_3$ under the
assumption that $f_0(s) T_i- f_i(s) =0, i = 1,2,3$, for some $s$ in the domain of $f$.
The eliminant polynomial by excellence is the determinant $\det(A)$, 
a polynomial with integer coordinates on the coefficients of a 
square matrix $A$, which vanishes on those coefficients for which 
there exists a nonzero solution $x$ to the equations
$A \cdot x =0$.  Elimination of variables is done in the literature through different 
incarnations of the following general strategy:
\begin{enumerate}
\item Reduce the problem to a linear algebra problem.
\item Hide the variables one wants to eliminate in the (typically monomial) bases.
\item Use determinants.
\end{enumerate}
This strategy is also the core in our syzygy-based algorithms.

The following short account of the approach of the use of syzygies
in our context is reconstructed from David Cox's lecture 
at the Conference PASI on Commutative Algebra and its connections to 
Geometry honoring Wolmer Vasconcelos, held in Brazil in 2009 \cite[Mini Course 1]{Olinda09}.
The use of syzygies for the implicitization of (conic) surfaces goes 
back to Steiner in 1832 \cite{St32}.
In 1887, Meyer describes in \cite{Me87} syzygies of three 
polynomials and makes a general conjecture proved by Hilbert in 1890 \cite{Hi90}.
Surface implicitization by eliminating parameters was studied by 
Salmon in 1862 \cite{Sa62} and Dixon in 1908 using resultants \cite{Di08}.
In 1995, Sederberg and Chen reintroduced the use of syzygies, 
by a method termed as  \emph{Moving curves and surfaces} \cite{SC95}.
Cox realized they were using syzygies \cite{Co01}, and produced several papers with other coauthors 
(Bus\'e, Chen, D'Andrea, Goldman, Sederberg, Zhang \cite{BCD03,Co03,CSC98,ZSCC03}).
In 2002, Jouanolou and Bus\'e  \cite{BuJo03} abstracted and generalized 
on a sound basis the method of Sederberg-Chen 
via approximation complexes, a tool in homological commutative 
algebra that had been developed by Herzog, Simis and Vasconcelos \cite{HSV,HSV1,HSV2}.
Bus\'e, Chardin, Jouanolou and Simis produced further advances 
in the homogeneous case \cite{BuJo03, BC05, Ch06, BCJ06,BCS10}. 
Goldman et al. studied the cases of planar and space curves \cite{JG09,HWJG10,JWG10}.
A generalization of the 
linear syzygy method when the support of the input polynomials 
is a square (that is, bihomogeneous of degree $(d,d)$) was proposed by
Bus\'e and Dohm \cite{BD07}, and for any polygon by Botbol, 
Dickenstein and Dohm \cite{BDD08}, and Botbol \cite{Bot08, Bot09, Bot10}.
This method is particularly adapted when the polynomials 
defining the parametrization are \emph{sparse}, which is often the case.
This will be our point of view in this article.
So, we want to solve the following problem.

\begin{problem*}\label{pb:main}
 Given a rational parametrization
$f$ as in\eqref{eq:f}, find a matrix representation $M$ of the surface $\Sc$
by means of syzygies and the monomial structure of $f_0, \dots, f_3$.
\end{problem*}

The main general algorithmic answer to this problem is given in Algorithm~\ref{algo:mainalgo} 
(see Theorem~\ref{th:mainalgo}).
Our assumption that the dimension of $\Sc$ is $2$ is equivalent to the fact, 
when we extend the map to the algebraic closure
$\overline{\kk}$ of $\kk$, that for almost all $p=f(s)$ in the image of $f$, 
the number of preimages by $f$ is finite. 
This number is called the degree of $f$ and noted $\deg(f)$.
The matrix representations $M$ of $\Sc$ provided by Algorithms~\ref{algo:mainalgo} and~\ref{algo:Hirzebruch}
moreover satisfy  that the greatest common divisor of all minors of 
$M$ of maximal size equals $F^{\deg(f)}$. 

\medskip

We present in Section~\ref{sec:naive} the first naive linear algebra 
algorithm to compute the implicit equation $F$, which requires to solve a huge linear algebra 
system. Moreover, this naive method ``forgets'' the parametrization and thus in general it is
not useful in Computer Aided Geometric Design.
In Section~\ref{sec:syzygies} we recall previous results on the
implicitization of curves and surfaces using syzygies and present 
our general methods of implicitization via linear syzygies, 
which requires to solve considerably smaller systems. 
We highlight in Section~\ref{ss:curves}
the main elimination step, which was termed \emph{instant elimination}
in \cite{EOWR} (see also the references therein). 

In Section~\ref{ssec:bihom} we present 
in Theorem~\ref{th:Hirzebruch} a refinement of Theorem~\ref{th:mainalgo} 
for bihomogeneous parametrizations, in the same spirit. 
Technicalities are avoided 
in our presentation in these sections, and in particular in the statement 
of our main results Theorems~\ref{th:mainalgo} and~\ref{th:Hirzebruch}.

Detailed hypotheses and proofs are deferred to
Section~\ref{sec:tgtools}, where we introduce the necessary background on
toric geometry. We collect in Appendix~\ref{sec:catools} 
a general overview of the rationale of the  tools and results from homological 
commutative algebra required for the proofs. A reader only interested in
the application of our results, can skip these two sections.

Section~\ref{sec:examples} illustrates the practicality and advantages
of our approach. For our computations, we use implementations in Macaulay 2,
which need different type of homogenizations to
use current routines (via a toric embedding or a multihomogenization via an abstract toric Cox ring) 
\cite{BD10,Bot10M2}.\footnote{Routine updates at:
{\tt http://mate.dm.uba.ar/\~{}nbotbol/Macaulay2/BigradedImplicit.m2}, {\tt 
http://mate.dm.uba.ar/\~{}nbotbol/Macaulay2/MatrixRepToric.m2}.}  
For the best performance of our algorithms, it would be important 
to design ad-hoc structured linear algebra strategies  
to compute syzygies in the sparse case.

\section{A naive linear algebra answer}\label{sec:naive}

The convex hull in $\RR^n$ of the exponents of the monomials 
occurring in a non zero (Laurent) 
polynomial $h$ in $n$ variables is called the Newton polytope 
$\Nc(h)$ of $h$. When $h$ is a 
polynomial in $(s_1,s_2)$ of degree (at most) $d$, its Newton 
polygon $\Nc(F)$ is contained in 
the triangle $\Delta_d$ with vertices $(0,0), (d,0), (0,d)$. 
The Euclidean area $\vol(\Delta_d)$ of this triangle is $d^2/2$ 
and its lattice area $\vol_\ZZ(\Delta_d)$
is equal to $2 \cdot d^2/2 = d^2$, which is always an integer.

We have the following classical result (c.f. for instance \cite{BuJo03}):

\begin{theorem}
 For generic polynomials $f_0,\dots,f_3$ of degree $d$, 
 the degree of the implicit equation $F$ is 
 $d^2$ and its Newton polytope is the tetrahedron with vertices 
 $(0,0,0), (d^2,0,0), (0,d^2,0), (0,0,d^2)$.
\end{theorem}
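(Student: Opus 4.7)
The plan is to pass to the projective setting, compute the degree of the image surface via a Bezout-style intersection, and then verify that each of the four vertices of the claimed tetrahedron corresponds to a monomial of $F$ with nonzero coefficient.

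First, I would homogenize $f_i$ to $\overline{f}_i\in\kk[s_0,s_1,s_2]_d$ and extend $f$ to $\overline{f}\colon\PP^2\dashrightarrow\PP^3$. For generic coefficients the four polynomials $\overline{f}_0,\dots,\overline{f}_3$ have no common zero on $\PP^2$ (the example $(s_0^d,s_1^d,s_2^d,(s_0+s_1+s_2)^d)$ shows the condition is nonempty, and it is open), so $\overline{f}$ is a morphism with $\overline{f}^{\,*}\OO_{\PP^3}(1)=\OO_{\PP^2}(d)$. Hence the pullback of a hyperplane $H\subset\PP^3$ is a curve of degree $d$ in $\PP^2$, and two generic such pullbacks meet in $d^2$ points by Bezout. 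By the projection formula this equals $\deg(f)\cdot\deg(\overline{\Sc})$, so $\deg(f)\cdot\deg(\overline{\Sc})=d^2$. To pin down $\deg F=d^2$ one needs $\deg(f)=1$ generically, which follows from upper semicontinuity of the degree in families together with a single birational example.

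Granting $\deg F=d^2$, one has $\Nc(F)\subseteq T$ with $T$ the tetrahedron with the stated vertices, and I would prove equality by verifying each vertex is in the support of $F$. The origin corresponds to the constant term $F(0,0,0)$, which is nonzero iff $(0,0,0)\notin\Sc$; this holds generically because $f_1,f_2,f_3$ have no common zero on $\{f_0\neq 0\}$. The vertex $(d^2,0,0)$ corresponds to the coefficient of $T_1^{d^2}$; after homogenizing $F$ in a new variable $T_0$, this is $\overline{F}(0,1,0,0)$, which is nonzero iff the point $[0{:}1{:}0{:}0]\in\PP^3$ does not lie on $\overline{\Sc}$. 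Its preimage would solve $\overline{f}_0=\overline{f}_2=\overline{f}_3=0$ on $\PP^2$, and three generic hypersurfaces of degree $d$ in $\PP^2$ have empty common zero locus since the codimension exceeds $\dim\PP^2$. The vertices $(0,d^2,0)$ and $(0,0,d^2)$ are handled symmetrically.

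The main obstacle is establishing generic birationality of $\overline{f}$: the intersection-theoretic computation only yields $\deg(f)\cdot\deg(\overline{\Sc})=d^2$, so ruling out $\deg(f)>1$ generically is where the real work lies. Everything else reduces to genericity statements about the vanishing of auxiliary systems of three or four degree-$d$ hypersurfaces on $\PP^2$, which are closed conditions with exhibited complements.
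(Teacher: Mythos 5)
The paper records this theorem as classical and points to \cite{BuJo03} without giving a proof, so there is nothing of the paper's to compare against; I assess your proposal on its own merits. The architecture is correct: passing to a base-point-free $\overline f\colon\PP^2\to\PP^3$ (open and nonempty, as your example shows), deriving $\deg(f)\cdot\deg\overline\Sc=d^2$ from the projection formula, and then pinning down the four vertices of the simplex by showing the corresponding coordinate points of $\PP^3$ have empty preimage (three generic degree-$d$ forms on $\PP^2$, codimension $3>2$) is exactly right, and the convexity of $\Nc(F)$ together with the degree bound finishes the polytope claim once $\deg F=d^2$ is known.

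The genuine gap is the one you flag yourself: generic birationality. ``Upper semicontinuity of the degree in families'' is not an off-the-shelf statement you can invoke --- $\deg(f)$ is not semicontinuous in any direct sense, since both $\deg(f)$ and $\deg\overline\Sc$ can jump along closed loci as long as their product stays $d^2$. What must be shown is that the locus where $\deg\overline\Sc<d^2$ sits in a proper closed subset, together with a single witness having $\deg\overline\Sc=d^2$. One repair: for base-point-free $f$ the space of degree-$d^2$ forms vanishing on the image of $\overline f$ is the kernel of a linear map whose entries depend polynomially on the coefficients, so its dimension is upper semicontinuous; it equals $1$ when $\deg\overline\Sc=d^2$ (the irreducible implicit equation and its multiples) and is strictly larger otherwise (multiply by lower-degree forms), so a single birational witness closes the argument. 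Your sample $(s_0^d,s_1^d,s_2^d,(s_0+s_1+s_2)^d)$ actually does serve --- the constraint $\zeta_0 s_0+\zeta_1 s_1+\zeta_2 s_2=\omega(s_0+s_1+s_2)$ with $d$-th roots of unity forces all four scalars equal for generic $s$ --- but you offer it only as evidence of base-point-freeness and do not check birationality. A cleaner route avoids families altogether: factor $\overline f$ as the $d$-th Veronese $\PP^2\hto\PP^N$, $N=\binom{d+2}{2}-1$, followed by a linear projection $\PP^N\dto\PP^3$; the Veronese surface has degree $d^2$, a generic center of projection misses it, and a generic linear projection of a surface to $\PP^3$ is birational onto its image, which yields $\deg\overline\Sc=d^2$ and $\deg f=1$ in one stroke.
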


In the sparse case, the following generalization holds \cite{SY94}.

\begin{theorem} 
For generic  polynomials $f_0, \dots, f_3$ with the same Newton polygon $P$, 
the degree of $F$ is the lattice area $v=\vol_\ZZ(P)$  and its Newton polytope 
is the tetrahedron with vertices $(0,0,0), (v,0,0), (0,v,0)$, $(0,0,v)$. 
\end{theorem}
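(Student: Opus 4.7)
The plan is to compute $\deg F$ by pulling back a generic line through $f$ and applying the Bernstein--Kushnirenko (BKK) theorem, and then to verify separately that the Newton polytope $\Nc(F)$ coincides with the tetrahedron $\Delta_v$ of vertices $(0,0,0), (v,0,0), (0,v,0), (0,0,v)$ by showing that all four of its vertices lie in the support of $F$.

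For the degree, fix a generic line $L = \{\ell_1 = \ell_2 = 0\} \subset \AA^3$, where $\ell_k = c_{k0} + \sum_{j=1}^{3} c_{kj} T_j$, $k=1,2$, are generic affine linear forms. For generic $L$ every point of $\Sc \cap L$ is a smooth point of $\Sc$ with exactly $\deg(f)$ preimages, so $\#(f^{-1}(L)) = \deg(f) \cdot \deg \Sc = \deg(f) \cdot \deg F$. The preimage is the common zero locus in $\kk^2$ of the polynomials $g_k := \sum_{j=0}^{3} c_{kj} f_j$, each a generic $\kk$-linear combination of $f_0, \dots, f_3$ with Newton polygon equal to $P$. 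By Bernstein's theorem, the generic number of common zeros in the torus $(\kk^*)^2$ of two polynomials with Newton polygon $P$ equals $\vol_\ZZ(P) = v$. For generic $f$ there are no base points, no solutions escape to the coordinate axes, and $\deg(f) = 1$ (a standard genericity property whenever the lattice spanned by $P$ equals $\ZZ^2$); thus $\deg F = v$.

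Since $\deg F = v$, we have $\Nc(F) \subseteq \Delta_v$. For the reverse inclusion, each coefficient of $F$ at a vertex of $\Delta_v$ is a polynomial in the coefficients of $f_0, \dots, f_3$, so it vanishes only on a proper Zariski-closed subset provided it is not identically zero. The constant term $F(0,0,0)$ is nonzero iff the origin is not on $\Sc$, which is generic. The coefficient of $T_i^v$ is nonzero iff the projective closure $\overline \Sc \subset \PP^3$ does not pass through the point at infinity in the $T_i$-direction, which is again generic and can be checked by exhibiting a single explicit parametrization with Newton polygon $P$ realizing all four vertex coefficients nonzero. Combining these genericity conditions yields $\Nc(F) = \Delta_v$.

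The main obstacle is justifying the BKK count at the toric boundary: Bernstein's theorem counts zeros in $(\kk^*)^2$, but $f$ is defined on the larger set $\kk^2$, so one must rule out preimages escaping to the coordinate axes. The cleanest resolution is to pass to the toric surface $X_P$ associated to $P$: the map $f$ extends to a rational map $X_P \dashrightarrow \PP^3$, the pullback of the hyperplane class is a line bundle $L_P$ whose self-intersection satisfies $L_P^2 = \vol_\ZZ(P) = v$, and this intersection number equals $\deg(f) \cdot \deg F$, yielding $\deg F = v$ directly without any boundary ambiguity.
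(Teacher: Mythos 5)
The paper gives no proof of this theorem; it is quoted from Sturmfels--Yu \cite{SY94}, where the Newton polytope of $F$ is obtained via the theory of Chow polytopes and sparse resultants, a more algebraic and more refined route (it computes $\Nc(F)$ outright, not merely its degree and vertices). Your geometric argument --- pulling back a generic line, applying BKK, and then passing to the toric surface $X_P$ --- is correct in substance, and the self-intersection identity $L_P^2 = \vol_\ZZ(P) = \deg(f)\cdot\deg F$ you state at the end should be taken as the primary proof of the degree formula: it subsumes both the BKK count and the boundary issue, whereas the line-pullback/BKK version in isolation still needs the extra (true, but unstated) fact that a generic pair $(g_1,g_2)$ drawn from the \emph{four}-dimensional span of the $f_i$, rather than from the whole coefficient space, is generic enough for Bernstein's theorem. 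Two smaller points. The proviso you add about the lattice spanned by $P$ is vacuous: the lattice points of every two-dimensional lattice polygon affinely generate $\ZZ^2$ (any such polygon admits a unimodular triangulation), so $\rho:(\kk^*)^2\hto\Tc_P$ is always an embedding and $\deg f = 1$ follows from the birationality of a generic linear projection of the nondegenerate surface $\Tc_P\subset\PP^m$ to $\PP^3$. And for the vertex-nonvanishing half of the Newton-polytope argument, rather than deferring to an unexhibited example, observe that each vertex condition says $\overline{\Sc}\subset\PP^3$ misses one of the four coordinate points, which holds generically because three generic members of the base-point-free linear system $\gen{g_0,\dots,g_3}$ on the surface $\Tc_P$ have empty common zero for dimension reasons; combined with $\Nc(F)\subseteq\Delta_v$ from the degree bound, this gives $\Nc(F)=\Delta_v$ as you claim.
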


A first naive algorithm would then be the following.
Assume the Newton polytope $\Nc(F)$ of $F$ is known (as in the previous theorems) 
and number $m_1, \dots, m_N \in \NN^3$ the integer points (also called lattice points) 
in $\Nc(F)$.  Consider indeterminates $c =(c_1, \dots, c_N)$ and write 
$F=\sum_{i=1}^N c_i T^{m_i}$. Substitute $T = f(s) $ and equate to zero 
the coefficient of each power of $(s_1,s_2)$ that occurs (clearing the denominator). 
This sets a system $\mathcal L$ of linear equations in $c$, with solution space 
of dimension $1$. Any nonzero solution $c$ of $\mathcal L$ will give a choice of 
implicit equation $F$.

\medskip

This solves the problem, but, which is the size of this linear system  $\mathcal L$?

\medskip

The number of lattice points in $\Delta_d$ equals $\binom {d^2+3}{3}$.
In the the sparse case,  the number of lattice points of a given
lattice polygon $P$ can be computed via
a theorem of Ehrhart valid for any $n$ \cite{Ehr67}, 
which amounts to Pick's formula in the case $n=2$. 
Given a positive integer $t$, we denote by $tP$ the Minkowski sum of $P$ with itself $t$ times, i.e. 
$t P = \{ p_1 + \dots +
p_t, \, p_i \in P \, \text{ for } \, i=1, \dots, t\}$. 
The number of lattice points in $tP$ equals
\begin{equation}\label{eq:Ehr}
\# (tP \cap \ZZ^2) \, = \, \vol(P) t^2 + \frac 1 2 \vol_\ZZ(\partial P) t + 1,
\end{equation}
where $\vol_\ZZ(\partial P)$ denotes the number of lattice points in the boundary of $P$. 
In particular, $\#(P \cap \ZZ^2) = \vol(P) + \frac 1 2 \vol_\ZZ(\partial P)  + 1$.

The proof of the following result is straightforward:

\begin{lemma} \label{lem:size}
In case $f_0, \dots, f_3$ are generic polynomials of degree 
$d$ in $(s_1, s_2)$, the number of unknowns in the linear system
$\mathcal L$ in the coefficients of the implicit equation $F$ 
is $\binom {d^2+3}{3}$ (approximately $d^6/6$) and the number of 
equations is $\binom {d^3+2}{2}$ (approximately $d^6/2$).

For any lattice polygon $P$ and generic polynomials $f_i$ 
with Newton polytope $P$, the linear system $\mathcal L$ 
has $\binom {\vol_\ZZ(P) + 3}{3}$ (approximately ${\vol_\ZZ(P)^3}/ 6$) variables
and $\frac {\vol_\ZZ(P)^3} 2 +\frac {\vol_\ZZ(P)^2} 2 \vol_\ZZ(P) + 1$ 
equations (approximately ${\vol_\ZZ(P)^3}/ 2$).
\end{lemma}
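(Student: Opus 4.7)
The plan is to count the variables of $\mathcal L$ and the equations of $\mathcal L$ separately; each count reduces to a lattice-point count in an appropriate polytope.

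The number of variables equals $\#(\Nc(F)\cap \ZZ^3)$ by construction, since $F = \sum c_i T^{m_i}$ has one unknown per lattice point $m_i$ of $\Nc(F)$. The two theorems immediately preceding the lemma identify $\Nc(F)$ with the dilation by $d^2$ (resp.\ by $v := \vol_\ZZ(P)$) of the standard $3$-simplex with vertices $0, e_1, e_2, e_3$. A direct count of lattice points in a scaled standard $3$-simplex gives $\binom{d^2+3}{3}$ (resp.\ $\binom{v+3}{3}$), with the claimed leading-order asymptotics $d^6/6$ and $v^3/6$.

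For the equations, I would substitute $T_i = f_i/f_0$ into $F$ and clear the denominator by $f_0^{\deg F}$, producing
\[
G(s) \;=\; f_0(s)^{\deg F}\,F\!\left(\tfrac{f_1(s)}{f_0(s)},\tfrac{f_2(s)}{f_0(s)},\tfrac{f_3(s)}{f_0(s)}\right) \;=\; \sum_{i=1}^N c_i\, f_0(s)^{\deg F - |m_i|}\prod_{j=1}^3 f_j(s)^{(m_i)_j},
\]
where $|m_i|$ denotes the coordinate sum of $m_i$ (so that the multi-indices in the summand indeed add up to the degree of $F$). Each equation of $\mathcal L$ comes from setting one monomial coefficient of $G$ (a linear form in $c$) to zero, so the equation count equals $\#(\Nc(G)\cap \ZZ^2)$. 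Every summand of $G$ is a product of $\deg F$ polynomials all with common Newton polygon, namely $\Delta_d$ in the dense case and $P$ in the sparse case. By Minkowski-additivity of Newton polytopes under multiplication, this forces $\Nc(G)\subseteq d^2\,\Delta_d = \Delta_{d^3}$ in the dense case and $\Nc(G)\subseteq vP$ in the sparse case. A direct count yields $\binom{d^3+2}{2}$ in the dense case, and applying Ehrhart's formula~\eqref{eq:Ehr} at $t = v$, together with the lattice-area identity $\vol_\ZZ(P)=2\vol(P)$, produces the stated sparse count with leading term $v^3/2$.

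There is essentially no obstacle here: the genericity hypothesis ensures that every lattice point of the bounding polytope actually appears in the support of $G$ with a nonzero linear form in $c$, so the Newton-polytope inclusion is tight and the lattice-point count is exactly the number of equations. The leading-order asymptotics then follow by keeping the top-degree term in each expression.
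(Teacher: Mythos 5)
Your proof is correct, and since the paper merely declares the lemma ``straightforward'' without giving a proof, your argument supplies exactly the intended lattice-point count: variables are the lattice points of the scaled $3$-simplex $\Nc(F)$, and equations are the lattice points of $(\deg F)\cdot P$ (resp.\ $\Delta_{d^3}$), the latter because each summand of $G$ is a Minkowski product of $\deg F$ factors all supported in $P$ (resp.\ $\Delta_d$).

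One remark: your computation actually exposes a typo in the statement of the sparse equation count. Applying~\eqref{eq:Ehr} with $t = \vol_\ZZ(P)$ and $\vol(P) = \tfrac12\vol_\ZZ(P)$ gives
\[
\frac{\vol_\ZZ(P)^3}{2} + \frac{\vol_\ZZ(P)}{2}\,\vol_\ZZ(\partial P) + 1,
\]
which is consistent with the stated asymptotic $\vol_\ZZ(P)^3/2$, whereas the printed second term $\tfrac{\vol_\ZZ(P)^2}{2}\vol_\ZZ(P)$ would instead give a leading term of $\vol_\ZZ(P)^3$. So the lemma's displayed formula should read $\vol_\ZZ(\partial P)$ without the extra square, and your derivation is the correct version.
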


We will see in Remark~\ref{rem:sizeMcl} of Section~\ref{sec:syzygies} 
that the size of the involved 
linear systems in the syzygy based methods is drastically smaller.

\section{The main algorithm based on linear syzygies}\label{sec:syzygies}

Our main result is Theorem~\ref{th:mainalgo}, which has a wide applicability.  
We distill and state it in naive terms,
 which do not call upon the more sophisticate tools recalled 
 in Section~\ref{sec:tgtools} and Appendix~\ref{sec:catools} required for its proof. This is why we postpone the
 detail of Hypotheses~\ref{hyp1} and~\ref{hyp1XP} until Section~\ref{sec:tgtools}. Our approach is 
 an inhomogeneous 
translation of the basic general algorithm for the sparse case in \cite{BDD08}, which were
inspired by the methods \cite{BuJo03} for classical homogeneous polynomials.

Before moving to the implicitization of rational surfaces, we recall the
practical approach of moving lines proposed by Sederberg and Chen \cite{SC95}
for the implicitization of planar curves.

\subsection{Curves}\label{ss:curves}

A planar rational curve $\Cc$ over a field $\kk$ is given as the image of a map 
\begin{eqnarray*}
 \kk^1 & \stackrel{f}{\dashrightarrow} & \kk^2 \\ s & \mapsto & 
 \left(\frac{f_1(s)}{f_0(s)},\frac{f_2(s)}{f_0(s)}\right),
\end{eqnarray*}
with $f_i \in \kk[s]$  polynomials of degree $d$ in $s$. 
We can assume without loss of generality that $\gcd(f_0,f_1,f_2)=1$. 
Remark that a {linear syzygy}  can be represented as a linear form 
$L = h_0T_0+h_1T_1+h_2T_2$ in the new variables $T =(T_0,T_1,T_2)$ with 
$h_i \in \kk[s]$ such that 
\[
 \sum_{i=0,1,2} h_i f_i =0.
\]
With this incarnation, a linear syzygy was termed a \emph{moving line}.
For any $\nu \in \NN$, consider the finite-dimensional $\kk$-vector space 
$\Syz(f)_\nu$ of linear syzygies satisfying 
$\deg(h_i) \le  \nu$, and call $N(\nu)$ its dimension.

\medskip
Pick a $\kk$-basis $h^i=(h^i_0, \dots, h^i_3), i = 1, \dots, N(\nu)$ of $\Syz(f)_\nu$.
Consider the monomial basis $\{1, s, \dots, s^\nu\}$ of polynomials 
in $s$ of degree at most $\nu$ and
write for  each syzygy $h^i$:
\begin{eqnarray*}
 L_i 	&=	& L_i (s, T) = \sum_{j=0,1,2} h_j^i(s) T_j = \sum_{j=0,1,2} 
 \paren{\sum_{k=0}^\nu c_{jk}^{i} s^k} T_j\\
	&=	& \sum_{k=0}^\nu \paren{\sum_{j=0,1,2} c_{jk}^i T_j} s^k.
\end{eqnarray*}
Let $M_\nu$ be the $ N(\nu) \times (\nu +1) $ matrix of coefficients 
of the $L_i$'s with respect to the basis
$\{1, s, \dots, s^\nu\}$:
\[
 M_\nu \, = \, \left(\sum_{j=0,1,2} c_{jk}^i T_j \right)_{i=1, \dots, 
 N(\nu), j = 0, \dots, \nu}.
\]
Observe that the variable $s$ has disappeared. This is the 
{\bf main elimination step}! 

\medskip

It is known that for $\nu \geq d-1$, the matrix $M_\nu$ is a square matrix 
with $\det(M_\nu)=F^{\deg(f)}$, where $F$ is an implicit equation of $\Cc$. In case 
$\nu \geq d$, then $M_\nu$ is a non-square matrix with more columns than rows, 
but still the greatest common divisor of its minors of maximal size equals $F^{\deg(f)}$. 
In both cases, for $\nu \geq d-1$, a point $P \in \PP^2$ lies on $\Cc$ iff the rank of $M_\nu(P)$ drops.

In other words, one can always represent the curve as a square matrix of linear syzygies, 
which gives a \emph{matrix representation} of the implicit equation. In principle, one could now 
actually calculate the implicit equation, but the matrix $M_\nu$ is easier to get and
well suited for numerical methods
\cite{ACGS07}. As we remarked in the surface case, testing whether 
a point $p$ lies on the curve only requires computing the rank of $M_\nu$ evaluated in $p$. 
Also, the singularities of $\mathcal C$ can
be read off from $M_\nu$ \cite{JG09,CKPU11, BDA10}.

In the absence of common zeros of $f_0, f_1, f_2$, it is
possible to find the implicit equation via a resultant computation.  
Note that for a parametrization with polynomials of degree $d$, the Sylvester 
resultant matrix uses a matrix of size $2d$, while the syzygy method uses 
$2$ matrices of size $d$, as the B\'ezout resultant.


\subsection{The general method of implicitization via linear syzygies for surfaces}
\label{ssec:general}

Assume we are given a rational parametrization of a surface $\Sc$  as in~\eqref{eq:f}.
We aim at finding a matrix representation for $\Sc$. Note that we can in principle assume
that $(f_0, \dots, f_3)$ are Laurent polynomials admitting negative exponents, but after
multiplying them by a common monomial, we get a new rational parametrization of $\Sc$
defined by polynomials $f_i \in \kk[s_1, s_2]$.
We will then assume, without loss of generality, 
that we are have a lattice polygon $P$ which lies in the first orthant of $\RR^2$
and contains the Newton polytopes of $f_0, \dots, f_3$.

We saw that in the curve case, it is always possible
to find a square  matrix representation. 
In the surface case, however, linear syzygies provide in general rectangular 
matrix representations and the implicit equation (raised to the degree of the map $f$) equals   
the great common divisor of the maximal minors (or the determinant of a complex).
A recent paper by Bus\'e \cite{buse13} presents a very interesting square matrix representation out of
a matrix representation $M$ when we work over the real numbers, by considering the square matrix $M M^t$. This approach is 
natural because of the properties of the rank of a real matrix with respect to its singular value decomposition. 
The determinant of $M M^t$ gives an implicit equation for $\Sc$ (in general, it gives $F$ with multiplicity),
which is moreover a sum of squares. As Bus\'e observes, for complex matrices it would be enough to
replace the transpose $M^t$ by the conjugate transpose.

The use of quadratic relations (i.e. linear syzygies among the products $f_if_j$ of any of two of 
the polynomials $f_i$ defining the parametrization) 
was proposed to construct square matrices \cite{SC95,CSC98,Co01,DA01,AHW05}. 
Khetan and D'Andrea generalized in 2006 \cite{KD06} the method 
of moving quadrics to the toric case. The choice of the quadratic syzygies is in general not canonical 
and the cost of computing syzygies is increased.
Note that syzygies in $(f_0, \dots, f_3)$ and the implicit equation $F$ have a common shape.
Indeed, linear syzygies $h=(h_0, \dots, h_3)$ of degree $\nu$ correspond to polynomials 
$H(s,T)=\sum_{i=0}^3 h_i(s) T_i$ such that $\sum_{i=0}^3 h_i(s) f_i(s) =0$, with $\deg(H)$ in
the $s$ variables equal to $\nu$, and $\deg(H)$ in the $T$ variables equal to $1$. 
Also, quadratic syzygies of degree $\nu'$ correspond to polynomials $H(s,T)=\sum_{i\le
j=0}^3 h_{i,j}(s) T_i T_j$ such that $\sum_{i,j=0}^3 h_{i,j}(s) f_{i} f_{j}(s)
=0$, with $\deg(H)$ in the $s$ variables equal to $\nu'$, and $\deg(H)$ in the
$T$ variables equal to $2$.  
The implicit equation (of degree $D$) is a polynomial
$H(s,T)=\sum_{|\alpha|\le D} h_\alpha T^\alpha$ such that $\sum_\alpha h_\alpha
f_1^{\alpha_1}(s) =0$. Thus, $\deg(H)$ in the $s$ variables equals $0$, and
$\deg(H)$ in $T$ variables equals $D$. So to go from linear syzygies to the implicit equation, in
some sense one has to play the
game of lowering the degree in the $s$ variables to
$0$ (which increases the degree in the $T$ variables up
to $D$).

We now present our main general algorithm to construct matrix representations
of parametrized surfaces. Clearly,  given any lattice polygon $P \subset \RR^2$,
$2 P =\{p_1 + p_2, p_i \in P\}$ is again a lattice polygon. Moreover,
in dimension two,
any lattice polygon is normal, which means that
$2 P \cap \ZZ^2 = \{p_1 + p_2, p_i \in P \cap \ZZ^2\}$, which is implicitly
used in the algorithm.

\begin{algorithm}\label{algo:mainalgo}
The following algorithm produces a matrix of polynomials in $(T_1, T_2, T_3)$
out of the input polynomials $f_0, \dots, f_3$ in variables $s=(s_1, s_2)$:

\begin{itemize}
\item {\bf INPUT:} A lattice polytope $P$ and 
polynomials $(f_0(s), f_1(s), f_2(s), f_3(s))$ with no common factor
and Newton polytopes $\Nc(f_i)$ contained in $P$.
\item {\bf STEP 1:} Consider syzygies $(h_0, \dots, h_3)$ with $\Nc(h_i) \subset
2 P$. Let
$(h^{(j)}_0,\dots,h^{(j)}_3)$, $j = 1, \dots, N$, be a $\kk$-basis of such
syzygies.
\item {\bf STEP 2:} Represent the syzygies as linear forms $L_j = h^{(j)}_0 T_0
+ \dots + h^{(j)}_3 T_3$. Write
$h^{(j)}_i = \sum_{\beta \in {2P} \cap \ZZ^2} h^{(j)}_{i, \beta} s^\beta$
and switch:
\[L_j = \sum_i h^{(j)}_i T_i  = \sum_\beta \left(
\sum_i  h^{(j)}_{i, \beta} T_i \right) s^\beta.\]
\item {\bf OUTPUT:} The matrix $M$ of linear forms
$\ell_{j,\beta}:=\sum_i  h^{(j)}_{i, \beta} T_i$.
\end{itemize}
\end{algorithm}

We illustrate the steps in Algorithm~\ref{algo:mainalgo} in the following example.

\begin{example}\label{example:IllustAlg}
 Let $P$ be the lattice polygon with vertices 
$(0,0)$, $(0,1)$, $(2,0)$ and $(1,1)$, with lattice points $p_0,\hdots,p_4$, as in
the figure.
We consider the following four polynomials with support in $P$,
where we denote $s:=(s_1,s_2)$, and given $p:=(i,j)$ we write $s^p:=s_1^is_2^j$:

\noindent\begin{minipage}{10cm}

\noindent $f_0 = 1 + 3 s_1 + s_1^2 + 2 s_2 + s_1 s_2 = s^{p_0}+ 3 s^{p_1}+ s^{p_2}+ 2s^{p_3}+s^{p_4}$,

\noindent $f_1 = 5 s^{p_0} - s^{p_1} - s^{p_2}+ 2s^{p_3} - s^{p_4}$,

\noindent $f_2 = 7 s^{p_0}+ 3 s^{p_1}+ 2 s^{p_2}+ 6 s^{p_3}+ 3 s^{p_4}$,

\noindent $f_3 =11 s^{p_0}+ 0 s^{p_1}+ 4 s^{p_2}+ 3 s^{p_3}+ 5 s^{p_4}$.
\end{minipage}
\begin{minipage}{2cm}
\begin{center}
   \includegraphics[scale=0.9]{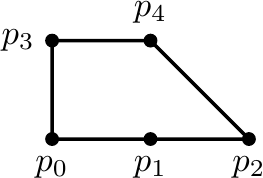}
  \label{fig:P}
\end{center}
\end{minipage}

To compute the syzygies in Step 1, we
consider the morphism $(a_0,a_1,a_2,a_3)\mapsto \sum_i a_if_i$, where $a_i$ are polynomials with support in $2P$.
Let $B$ be the matrix of this map in the monomials bases.
Since $2P$ has $12$ lattice points and $ \sum_i a_if_i$ has support in $3P$, which has $22$ lattice points, 
then $B$ is a \textsl{Sylvester} matrix of size $22\times 48$. It can be easily checked that $B$
is full ranked (same as for generic polynomials). Thus, the kernel of $B$ has dimension $N=48-22=26$, 
which is the number of linearly independent syzygies.

To construct the matrix $M$, assume that we choose as our first syzygy 
the following $4$-tuple of polynomials $(h^{(1)}_0, \dots, h^{(1)}_3)$ with $\Nc(h^{(1)}_i) \subset
2 P$:

\

$h^{(1)}_0= -196s^{2p_0}+504s^{p_0+p_1}-257s^{p_0+p_2}+672s^{p_0+p_3}+234s^{p_0+p_4}$,

$h^{(1)}_1=-237s^{p_0+p_2}+420s^{p_0+p_3}-168s^{p_0+p_4}$,

$h^{(1)}_2=28s^{2p_0}+10s^{p_0+p_2}-364s^{p_0+p_3}+226s^{p_0+p_4}$, and

$h^{(1)}_3=-216s^{p_0+p_4}$.

\

We consider $L_1 = h^{(1)}_0 T_0
+ \dots + h^{(1)}_3 T_3$ and we write
\[L_1 = (-196T_0+28T_2)s^{2p_0}
+(504T_0)s^{p_0+p_1}
+(-257T_0-237T_1+10T_2)s^{p_0+p_2}\]
\[
+(672T_0+420T_1-364T_2)s^{p_0+p_3}
+(234T_0-168T_1+226T_2-216T_3)s^{p_0+p_4},\]
which gives the first column of the 
$28\times 12$-matrix $M$ (computed with Macaulay2 computer-algebra software \cite{M2})

{\footnotesize\begin{verbatim}
| -196T_0+28T_2               0                           0                          ...
| 504T_0                      -196T_0+28T_2               0                          ...
| -257T_0-237T_1+10T_2        504T_0                      -196T_0+28T_2              ...
| 672T_0+420T_1-364T_2        0                           0                          ...
| 234T_0-168T_1+226T_2-216T_3 672T_0+420T_1-364T_2        0                          ...
| 0                           -257T_0-237T_1+10T_2        504T_0                     ...
| 0                           234T_0-168T_1+226T_2-216T_3 672T_0+420T_1-364T_2       ...
| 0                           0                           -257T_0-237T_1+10T_2       ...
| 0                           0                           234T_0-168T_1+226T_2-216T_3...
| 0                           0                           0                          ...
| 0                           0                           0                          ...
| 0                           0                           0                          ...

                           12                          26
Matrix (QQ[T , T , T , T ])   <--- (QQ[T , T , T , T ])
            0   1   2   3               0   1   2   3
\end{verbatim}}

The columns of this matrix $M$ are given by a choice of a basis of syzygies with support in $2 P$.
The corresponding linear forms $L_1, \dots, L_N$ are known as the \emph{moving 
planes} defining the surface
parametrized by $f_1, \dots, f_3$.
The associated rational map $f$ has  $\deg(f)=1$. It can be checked that 
the common factor of any maximal minor of $M$ is the degree $3$ implicit equation of the closed
image of $f$:
\[
 F= 2643T_0^3+2905T_0^2T_1+1345T_0T_1^2+91T_1^3-8T_0^2T_2-444T_0T_1T_2+284T_1^2T_2+\cdots,
\]
as asserted by Theorem~\ref{th:mainalgo} below.

Note that we have to write the lattice points in $2P$ as a sum of two points in $P$, but in general
there is not a unique way of doing this. In our example, for instance, $p_0+p_2= p_1+p_1$, so 
a choice was made. In fact, it is possible to make a coherent choice in general with the use of weight vectors,
but any choice will work since in the quotient ring $A$ defined in~\eqref{eq:A} below, it holds that
$X_0.X_2$ and $X_1^2$ are identified.
\end{example}

We now state our main result. The proof will be given in Section~\ref{sec:tgtools}.

\begin{theorem}\label{th:mainalgo}
Given  $(f_0(s), f_1(s), f_2(s), f_3(s))$ with no common factor, with Newton polytopes
contained in $P$  and satisfying hypotheses~\ref{hyp1} below,\theoremstyle{remark}
Algorithm~\ref{algo:mainalgo} computes a presentation matrix of the implicit equation
of the rational map $f$. That is, the rank of the matrix $M$ drops precisely when evaluated
at the points in the closure of the image of $f$. 

Moreover, the implicit equation $F$
can be computed as
\begin{equation}\label{eq:deg}
F^{\deg(f)} \, = \, {\rm gcd}(\mbox{maximal  minors  of }  M).
\end{equation}
\end{theorem}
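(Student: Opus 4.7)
The proof plan is to recast Algorithm~\ref{algo:mainalgo} as the computation, in a specific graded piece, of the map from the module of linear syzygies into the Cox ring of the toric variety $X_P$ associated with $P$, and then to invoke the theory of approximation complexes to identify its cokernel with a module supported precisely on the image of $f$.

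First I would set up the toric framework promised in Section~\ref{sec:tgtools}: let $A$ be the Cox ring of the toric surface $X_P$ graded by its class group, and let $F_0,\dots,F_3\in A$ be the homogenizations of $f_0,\dots,f_3$, all of the same degree $[P]$. The map $f$ extends to a rational map $\phi\colon X_P\dashrightarrow\PP^3$, and the image surface $\Sc\subset\PP^3$ is cut out by the implicit equation $F$. The matrix $M$ produced by the algorithm is, by construction, the matrix in monomial bases of the map
\[
\Phi_{2P}\colon \Syz(F)_{2P}\otimes_\kk \kk[T_1,T_2,T_3]_1 \,\lto\, A_{2P}\otimes_\kk \kk[T_1,T_2,T_3],
\]
obtained by sending a syzygy $h=(h_0,\dots,h_3)$ to the linear form $\sum h_i T_i$ and reading off coefficients in the monomial basis of $A_{2P}$; equivalently, it is the degree-$2P$ strand of a presentation matrix of the symmetric algebra $\SIR$ viewed as a bigraded $A[T_0,\dots,T_3]$-module.

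Next I would introduce the approximation complex $\Z.$ of Herzog--Simis--Vasconcelos attached to the ideal $I=(F_0,\dots,F_3)\subset A$; this is the Koszul-cycle complex whose $0$-th homology is precisely $\SIR$. The plan is to show, under Hypotheses~\ref{hyp1} (which should guarantee the base locus of $f$ is a locally almost complete intersection of the right codimension, with suitable depth along it, and that $2P$ lies beyond the Castelnuovo--Mumford regularity threshold of the relevant toric ring), that the graded strand $(\Z.)_{2P}$ is acyclic outside the irrelevant ideal of $X_P$. This is the standard mechanism behind \cite{BuJo03,BDD08}; the degree $2P$ is what makes all higher homologies supported only at the irrelevant locus. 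Consequently, $M$ is a presentation in degree $2P$ of the $\kk[T]$-module $H^0(X_P,\SIR)_{2P}$, which by the acyclicity of $\Z.$ coincides in this degree with the analogous strand of the Rees algebra up to a well-controlled saturation term.

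Once acyclicity is in place, the two conclusions of the theorem follow from standard consequences of the ``determinant of a complex'' (MacRae invariant). The annihilator of the $\kk[T]$-module $\coker(M)$ is principal away from a subvariety of codimension $\geq 2$ in $\PP^3$, and it equals the image ideal $(F)$ raised to the generic fibre length, which is $\deg(f)$; this simultaneously gives the rank-drop characterization of $\Sc$ and the equality $\gcd(\mbox{max minors of } M)=F^{\deg(f)}$, since the gcd of maximal minors computes precisely this first Fitting-type invariant. The \emph{main obstacle} is the acyclicity step: one must verify that the hypotheses on the base points of $f$ and on the polytope $P$ force the higher Koszul cycles to vanish (or be irrelevant-supported) in degree $2P$. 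Once that is established, the passage from ``cokernel supported on $\Sc$'' to ``gcd equals $F^{\deg f}$'' is a formal MacRae computation.
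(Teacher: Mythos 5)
Your proposal captures the correct overall machinery (approximation complexes of Herzog--Simis--Vasconcelos, acyclicity of the $\mathcal Z$-complex in an appropriate degree, identification of the gcd of maximal minors via the determinant of a complex/MacRae invariant), and you correctly isolate the acyclicity statement as the crux. That is indeed the content of the cited results in \cite{BDD08} that the paper relies on, as the Appendix outlines. However, you have set up the wrong graded ring for \emph{this} theorem. The paper proves Theorem~\ref{th:mainalgo} through the \emph{embedded} toric variety $\Tc_P\subset\PP^m$: one passes from $f_i$ to linear forms $g_i$ in the $\ZZ$-graded coordinate ring $A=\kk[X_0,\dots,X_m]/J_P$ (so $\deg g_i=1$), and the algorithm's choice of syzygies supported in $2P$ is exactly the degree-$\nu=2$ graded piece $A_\nu$. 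The proof is then two short citations: \cite[Cor.~14]{BDD08} gives that the syzygy-coefficient matrix in any degree $\nu\ge 2$ is a matrix representation, and \cite[Thm.~13]{BDD08} gives the equality $\gcd(\mbox{maximal minors})=F^{\deg f}$. Your setup instead goes through the Cox ring of the \emph{abstract} toric variety $X_P$ with its class-group grading and the irrelevant ideal, where the homogenizations $F_i$ have degree $[P]$; this is precisely the route the paper reserves for Theorem~\ref{th:Hirzebruch}, with Hypotheses~\ref{hyp1XP} (base points of $\overline{f}$ on $X_P$) in place of Hypotheses~\ref{hyp1} (base points of $g$ on $\Tc_P$). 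These two hypotheses are not interchangeable, so if you insist on the Cox-ring route you are not proving the theorem as stated but its multigraded cousin.

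Two smaller inaccuracies worth flagging. You gloss Hypotheses~\ref{hyp1} as requiring the base locus to be ``a locally \emph{almost} complete intersection''; the hypothesis actually asks for \emph{local complete intersection} base points of $g$ on $\Tc_P$ (the weaker almost-complete-intersection condition only guarantees that the matrix has full rank, but the gcd of maximal minors may then acquire extraneous factors, as Remark~\ref{rem:whatifnot} explains). Also, the hypothesis constrains the homogenized map $g$ (resp.\ $\overline f$), not $f$ itself: as Example~\ref{ex:nonlaci} shows, good behaviour of $f$'s base points in the torus does not imply the needed behaviour at the torus-invariant divisors. Finally, your description of $\Phi_{2P}$ has a spurious factor $\kk[T_1,T_2,T_3]_1$ on the source side; the relevant map is the degree-$\nu$ strand $(Z_1)_\nu\otimes\kk[T]\to A_\nu\otimes\kk[T]$, $a\mapsto\sum a_iT_i$, whose matrix is $M$.
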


The main ingredient for the validity of Algorithm~\ref{algo:mainalgo} to give
a matrix representation
is the choice  ${\mathbf 2P} \cap \ZZ^2$
of the support of the linear syzygies.
Again, the ``instant'' elimination is done in STEP 2, where the $s$ variables
give the monomial basis which is used to compute the matrix $M$ (and thus they
disappear from the output!).

In fact, Algorithm~\ref{th:mainalgo} can be run 
without checking Hypotheses~\ref{hyp1}.
We point out in Remark~\ref{rem:whatifnot} the possible outcomes. 
The general algorithm can be refined using Theorem~11 in \cite{BDD08}.

\begin{theorem}\label{th:refinement}
Assume $f$ satisfies the hypotheses of Theorem~\ref{th:mainalgo}. 
If the lattice polygon $P$ can be written as a multiple $P = dP'$ of another another lattice polygon
$P'$  \emph{without
interior lattice points}, then we can consider in {\bf STEP 1}  of Algorithm~\ref{algo:mainalgo} 
syzygies $(h_0,\dots, h_3)$ with smaller support
$N(h_i)$ contained in $(2d-1)P'$ (which is strictly contained in $2 P$), and 
the {\bf OUTPUT} will still be a  matrix 
representation for $f$. 
Moreover, in case $P'$ is the unit simplex, it is enough to consider syzygies with support
inside $(2d-2)P'$.
\end{theorem}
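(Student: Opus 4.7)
The plan is to reduce Theorem~\ref{th:refinement} to an application of Theorem~11 of \cite{BDD08}, after carrying out the same homogenization--to--toric--geometry translation that underlies Theorem~\ref{th:mainalgo}. Concretely, I would work in the toric variety $X:=X_{P'}$ associated with the lattice polygon $P'$, and interpret the $f_i$ as global sections of the ample line bundle $\OO_X(dD_{P'})=\OO_X(D_P)$, where $D_{P'}$ is the nef/ample divisor whose lattice polytope is $P'$. In this language, STEP~1 of Algorithm~\ref{algo:mainalgo} is nothing other than the computation of the degree-$(\nu D_{P'})$ graded piece of the first syzygy module of $(f_0,\dots,f_3)$ in the Cox ring of $X$, and the rectangular matrix $M$ is obtained by looking at the corresponding graded piece of the first map of the approximation complex $\Z.$ of $(f_0,\dots,f_3)$.

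Once the set-up is in place, the standard acyclicity--plus--regularity argument from the approximation-complex machinery (as distilled in Appendix~\ref{sec:catools} and formalized in \cite{BDD08}) tells us that $M$ is a matrix representation as soon as $\nu$ lies in the \emph{good region} where all the relevant local/toric cohomologies of the $\Zc_i$ vanish. Theorem~\ref{th:mainalgo} uses the safe (but crude) choice $\nu D_{P'}=2 D_P=2d\cdot D_{P'}$. To improve this, I would invoke Theorem~11 of \cite{BDD08}, which gives the sharper regularity bound $\nu=2d-1$ under hypotheses~\ref{hyp1} whenever the polygon $P'$ carries no interior lattice points. Thus, replacing $2P=2d P'$ by $(2d-1)P'$ in STEP~1 still yields a region of syzygies sufficient to make the output matrix a matrix representation, with the same gcd-of-maximal-minors formula~\eqref{eq:deg}.

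The improvement to $(2d-2)P'$ when $P'$ is the unit simplex would come from the classical homogeneous regularity bound of Bus\'e--Jouanolou \cite{BuJo03}: in that case $X_{P'}=\PP^2$, the Cox ring is the standard graded polynomial ring $\kk[x_0,x_1,x_2]$, and the $f_i$ are classical homogeneous forms of degree $d$, for which the approximation complex is acyclic in degrees $\geq 2d-2$ whenever the base locus has the expected codimension. So one more unit can be saved, exactly as in the classical case; this recovers the sharp bound already present in the literature.

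The main obstacle is the cohomological vanishing that underlies Theorem~11 of \cite{BDD08}: one must check that under hypotheses~\ref{hyp1}, the relevant graded pieces $H^i_{B}(\Z.)_\nu$ of local cohomology with respect to the irrelevant ideal $B$ of the Cox ring of $X_{P'}$ vanish for $\nu=(2d-1)D_{P'}$ (resp.\ $(2d-2)D_{P'}$). The key arithmetic input is that the absence of interior lattice points in $P'$ forces $H^2(X_{P'},\OO_{X_{P'}}(-D_{P'}))=0$ by toric Serre duality (since $H^2$ on a smooth complete toric surface is computed by interior lattice points of the dualizing-twisted polytope), and this vanishing cascades through the Koszul and approximation complexes to lower the critical degree by one. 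I would carry out this cohomological bookkeeping explicitly and then assemble it with the acyclicity criterion of Section~\ref{sec:tgtools} to conclude both refinements.
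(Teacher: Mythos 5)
Your proposal takes essentially the same route as the paper, which simply refers to \cite[Thm.~11]{BDD08} for this refinement: after reinterpreting the $f_i$ as degree-$d$ elements of the coordinate ring $A'$ of $\Tc_{P'}$, the absence of interior lattice points in $P'$ (the arithmetic-genus-zero condition) is exactly what lowers the critical degree from $2d$ to $2d-1$, and to $2d-2$ when $\Tc_{P'}=\PP^2$ as in \cite{BuJo03}. The only cosmetic difference is that you phrase the reduction in the Cox-ring/multigraded language of Section~4.2, whereas \cite[Thm.~11]{BDD08} is stated in the $\ZZ$-graded language of Section~4.1; these are equivalent here since the degree-$\nu D_{P'}$ piece of the Cox ring and $A'_\nu$ both have the lattice points of $\nu P'$ as a monomial basis, so this does not affect the argument.
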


We then have the following comparison between the general syzygy method and the
naive linear algebra method described in Section~\ref{sec:naive}.

\begin{remark}\label{rem:sizeMcl}
Assume that $P$ is the triangle of size $d$. 
Then, as it is enough to consider syzygies 
of degree $2d-2$, they can be found by solving a linear system 
on $ 4 \binom{2d}{2}$ variables with $\binom {3d}{2}$ equations. 
That is, both sizes, as well as the vector space dimension of 
the space of syzygies in this degree, are quadratic in $d$.
The matrix $M$ has then a number of rows quadratic in $d$. 
The number of its columns equals $\binom{2d}{2}$, again quadratic in $d$. Comparing
with the sizes in Lemma~\ref{lem:size}, which are of degree $6$ in $d$,
we observe that the syzygy method is a great improvement on the naive linear algebra method!
\end{remark}

The same improvement occurs for any lattice polygon $P$. Using~\eqref{eq:Ehr}, we see that
syzygies with support in $2P$ can be obtained by solving a system with approximately
$9 \, {\rm vol}(P)$ equations in $16 \, {\rm vol}(P)$ variables and both row and column sizes
of the  matrix representation $M$ are of the order of ${\rm vol}(P)$ and not of its
cube, as in Lemma~\ref{lem:size}.

\subsection{The bihomogeneous case and beyond}\label{ssec:bihom}

As we have mentioned, the main motivation for the implicitization problem comes from Computer Aided
Geometric Design and geometric modeling. In this area, bihomogeneous surfaces (corresponding to
rectangular support $P$) are known as tensor product surfaces, and they  play a central role, 
in particular the B\'ezier surfaces.  
Quoting Ulrich Dietz~\cite{die98}: ``\textit{In current CAD systems tensor product surface 
representations with their rectangular structure are a de facto standard}".
These surfaces (called \emph{NURBS}) are given by pieces of
parametrized surfaces cut by curves. So, it is necessary to control the location of the parameter,
which can be achieved by 
computing the kernel of the matrix representation we give,
as explained in~\cite{buse13}.

Due to the nature of the base locus of the parametrization, 
many of the current geometric modeling systems do not satisfy the hypotheses to be detailed
in~\ref{hyp1}
needed for Theorem~\ref{th:mainalgo} to hold, if considered as homogeneous polynomials
(with $P$ an equilateral triangle). But if we use a rectangle $P$ as the input in
Algorithm~\ref{algo:mainalgo},  it is
possible to get a full-ranked  matrix representation by Theorem~\ref{th:mainalgo}.
In this bihomogeneous case, 
the detailed study of regularity in \cite{BC10} allows to get the following improvement
in the support of the proposed linear syzygies in STEP 1 of Algorithm~\ref{algo:mainalgo}: 
it is enough that the support of these syzygies is contained in a polygon obtained by only
enlarging the rectangle support $P$ of the input polynomials (approximately) 
to its double in \emph{only} the horizontal 
\emph{or} the vertical direction, instead of considering syzygies with support in 
(the lattice points of) $2P$.

A more general result 
can be obtained for bigraded toric surfaces, and in particular for
lattice polygons defining a
Hirzebruch surface, that it, for
Hirzebruch quadrilaterals $H_{a,b,n}$ with vertices $(0,0), (a,0), (0,b)$
and $(a+nb,b)$, for any $a,b n \in \NN$. We state this extension in Theorem~\ref{th:Hirzebruch} below.

\medskip

\begin{algorithm}\label{algo:Hirzebruch}
Take as {\bf INPUT} a Hirzebruch lattice polygon $P= H_{a,b,n}$
and bivariate polynomials  $(f_0, f_1, f_2, f_3)$ with Newton polygons
contained in $H_{a,b,n}$, and which 
satisfy the hypotheses~\ref{hyp1XP}. Run algorithm~\ref{algo:mainalgo} with the
following modification: in {\bf STEP 1} consider a basis of syzygies with support in the
smaller lattice quadrilaterals $H_{2a-1,b-1,n}$ (or  $H_{a-1,2b-1,n}$ instead). 
The {\bf OUTPUT} is the corresponding 
matrix $M$ of linear forms.
\end{algorithm}

In most cases, it is convenient to consider syzygies with support in $H_{2a-1,b-1,n}$ 
rather than in $H_{a-1,2b-1,n}$ since the first one has less lattice points. 

\noindent\begin{minipage}{7cm}
In general, a Hirzebruch lattice polygon $H_{x,y,n}$ has the shape in the diagram on the right.
\end{minipage}
\begin{minipage}{5cm}
\begin{center}
   \includegraphics[scale=0.9]{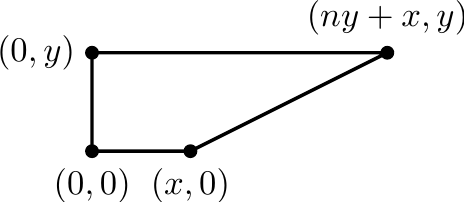}
  \label{fig:H}
\end{center}
\end{minipage}

\begin{remark}\label{rem:HirzBigrad}
Note that for $n=0$, $H_{a,b,0}$ is the standard lattice rectangle with vertices in $(0,0),(a,0),(0,b), (a,b)$, 
and thus Algorithm \ref{algo:Hirzebruch} 
works in particular in a standard bihomogeneous setting.
\end{remark}

\begin{theorem}\label{th:Hirzebruch}
Given  $(f_0(s), f_1(s), f_2(s), f_3(s))$ with no common factor, with Newton polytopes
contained in a $H_{a,b,n}$  and satisfying hypotheses~\ref{hyp1XP} below,
Algorithm~\ref{algo:Hirzebruch} computes a presentation matrix of the implicit equation
of the rational map $f$. That is, the rank of the output matrix $M$ drops precisely when evaluated
at the points in the closure of the image of $f$. 

Moreover, the implicit equation $F$
can be computed as
\begin{equation}\label{eq:deg2}
F^{\deg(f)} \, = \, {\rm gcd}(\mbox{maximal  minors  of }  M).
\end{equation}
\end{theorem}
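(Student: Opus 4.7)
The plan is to adapt the proof strategy for Theorem~\ref{th:mainalgo} to the toric setting of the Hirzebruch surface $\mathcal{H}_n$, whose Cox ring is the bigraded polynomial ring $R=\kk[x_1,x_2,y_1,y_2]$ with $\deg(x_1)=\deg(x_2)=(1,0)$, $\deg(y_1)=(-n,1)$, $\deg(y_2)=(0,1)$. A lattice polygon $H_{a,b,n}$ corresponds to the ample bidegree $(a,b)$ in this grading, and the global sections of the corresponding line bundle are spanned by the monomials $x^{\alpha}y^{\beta}$ whose lattice exponents lie in $H_{a,b,n}$. First I would homogenize the input tuple $(f_0,\dots,f_3)$, producing four bihomogeneous elements $F_0,\dots,F_3\in R$ of bidegree $(a,b)$ that encode the rational map $f$; the hypotheses~\ref{hyp1XP} translate into the usual conditions on the base locus (dimension of $V(F_0,\dots,F_3)$ in $\mathcal{H}_n$ and genericity of the base points) that make the $\mathcal{Z}$-approximation complex of the ideal $(F_0,\dots,F_3)$ acyclic on the punctured Cox spectrum.

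The core step is to choose the shift at which to take the graded strand of the approximation complex so that (i) its zeroth homology computes $\Sym(I)$ localized away from the irrelevant ideal and hence detects the image of $f$, and (ii) the higher homologies vanish in that bidegree. Following the strategy of \cite{BC10} for the bihomogeneous case, the relevant bigraded Castelnuovo--Mumford regularity on $\mathcal{H}_n$ has to be bounded in terms of $a,b,n$; combined with the standard spectral sequence argument (applied to the double complex whose rows are the Koszul-type strands of the $\mathcal{Z}$-complex in the $T_i$'s and whose columns compute local cohomology), this pins down the smallest polygon whose lattice points index a basis of sufficient linear syzygies. The computation of the regularity on $\mathcal{H}_n$ shows that the admissible shifts are precisely the bidegrees $(2a-1,b-1)$ and $(a-1,2b-1)$, which correspond to $H_{2a-1,b-1,n}$ and $H_{a-1,2b-1,n}$ respectively.

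Once acyclicity of the $\mathcal{Z}$-complex in the chosen bidegree is established, the first map of the complex, written in the monomial basis of sections in bidegree $(2a-1,b-1)$, is precisely the matrix $M$ produced by Algorithm~\ref{algo:Hirzebruch}; acyclicity then implies that its cokernel is (up to saturation at the irrelevant ideal) $\Sym(I)$, and hence rank of $M$ drops exactly on the closure of the image of $f$, proving the first assertion. For the second assertion, the determinant of the complex gives $F^{\deg(f)}$ as the greatest common divisor of the maximal minors of $M$, by the same MacRae-invariant/determinant-of-a-complex argument used for Theorem~\ref{th:mainalgo} and already spelled out in~\cite{BDD08,BD07}.

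The main obstacle I expect is establishing the refined bigraded regularity bound on $\mathcal{H}_n$: showing that for parametrizations whose Newton polytopes lie in $H_{a,b,n}$ and satisfy~\ref{hyp1XP}, the local cohomology modules $H^i_{\mathfrak{m}}(\Sym(I))$ vanish in the bidegrees $(2a-1,b-1)$ and $(a-1,2b-1)$. Unlike the standard bihomogeneous case ($n=0$) treated in \cite{BC10}, the twist by $n$ shears the polygon and one must carefully track how the local cohomology of the Koszul cycles on $\mathcal{H}_n$ is supported; the verification ultimately reduces to a vanishing statement for the cohomology of the appropriate line bundles on $\mathcal{H}_n$, which can be carried out by the Euler sequence or by direct toric Serre duality. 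All remaining steps are then parallel to the proof of Theorem~\ref{th:mainalgo} to be given in Section~\ref{sec:tgtools}.
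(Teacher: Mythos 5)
Your strategy matches the paper's proof: the paper also works in the Cox ring of the Hirzebruch surface $X_P$ with its natural bigrading, reads off the admissible bidegrees $(\nu_1,\nu_2)$ with $\nu_1\ge 2a-1,\ \nu_2\ge b-1$ (or the symmetric pair), and obtains both the rank-drop statement and the gcd identity~\eqref{eq:deg2} from the acyclicity of the approximation complex in those bidegrees. The only substantive difference is that the paper outsources the technical heart of the argument to a single citation, \cite[Thm.~5.5]{Bot10}, which already contains the multigraded regularity bound and the determinant-of-complex identity in the required generality; you instead propose to re-derive the local cohomology vanishing $H^i_{\mathfrak m}(\mathrm{Sym}(I))_{(2a-1,b-1)}=0$ along the lines of \cite{BC10}, and you correctly flag this as the main obstacle but do not carry it out. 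If you want the proof to be self-contained you would need to actually establish that vanishing on $\mathcal H_n$ (tracking how the twist by $n$ shears the region of non-vanishing local cohomology of the Koszul cycles); if a citation is acceptable, then appealing to \cite[Thm.~5.5]{Bot10} as the paper does closes that gap directly, and the rest of your outline is correct and parallel to the proof of Theorem~\ref{th:mainalgo}.
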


The proof of Theorem~\ref{th:Hirzebruch} will be also given in section~\ref{sec:tgtools}.


\section{The hypotheses via toric geometry and the proofs of our main results}\label{sec:tgtools}

In this section we will recall a minimum of theoretical tools 
from toric geometry in order to
state the hypotheses needed for Theorems~\ref{th:mainalgo} and~\ref{th:Hirzebruch}
and to give their proofs. The main homological commutative  algebra tools that are the core
of the proofs are recalled in Appendix~\ref{sec:catools}.

We refer to \cite{Co03b, Fu93,CLS11} and  \cite[Ch.5\&6]{GKZ94} for the general notions, and to \cite[\S 2]{KD06},
\cite{BDD08,Bot09} for applications to the implicitization problem.
Any reader only interested in the application of Algorithm~\ref{algo:mainalgo} or
its bihomogeneous (toric) refinement given in Algorithm~\ref{algo:Hirzebruch} can skip this section.

As usual, we denote by $\kk^*=\kk \setminus \{0\}$ the multiplicative group of units of $\kk$.
The first observation is that we can equivalently consider our parametrization~\eqref{eq:f} 
as a map $\tilde{f}: \kk^2 \dashrightarrow 
 \PP^3(\kk)$ or $\tilde{f}: (\kk^*)^2 \dashrightarrow 
 \PP^3(\kk)$ with image inside $3$-dimensional projective space, and domain a dense open set $U$ in affine space
 $\kk^2$ or the torus $(\kk^*)^2$ over $\kk$, given by  
\begin{equation}\label{eq:of}
 s \mapsto (f_0(s): f_1(s) : f_2(s) : f_3 (s)),
\end{equation}
for any $s \in U$, and we have the commutative diagram
\begin{equation}\label{diagram1}
\xymatrix{
(\kk^*)^2  \ar@{-->}[r]^{f} \ar@{-->}[rd]_{\tilde{f}} & \kk^3 \ar@{^(->}[d]_\iota\\
 &\PP^3.}
\end{equation}
In fact, if $F$ is the implicit equation of the (closure of the) 
image of $f$, the (closure of the) image of $\tilde{f}$ is the closure
of  $\Sc$ under the standard embedding $\kk^3 \hookrightarrow \PP^3(\kk)$ 
and its equation is the homogenization of $F$. 

Similarly,  we can 
consider our rational parametrization from any algebraic variety 
which contains the domain of $f$ as a \emph{dense} subset. We will choose embedded or
abstract compact toric varieties to get a degree or multidegree notion that will
allow us to get homological arguments to ``bound'' the support of the syzygies
in Theorem~\ref{th:mainalgo} and in~\ref{th:Hirzebruch}.

\subsection{Toric embeddings}\label{ssec:toricemb}
Let $\tilde{f}$ be a rational map
as in~\eqref{eq:of}.
The base points of the parametrization are the common zeros of $f_0, \dots, f_3$,
that is, the points where the map is not defined.
We assume that $\tilde{f}$ is a generically finite map onto its image and 
hence it para\-me\-trizes an irreducible surface $\Sc \subset \PP^3$. We also assume
without loss of generality 
that $\gcd(f_0,f_1,f_2,f_3)=1$, which means that there are only finitely many base points.

Let $P \subset \RR^2$ be a lattice polygon with $m+1$ lattice points,
which contains the Newton polygons $\Nc(f_0),\dots,\Nc(f_3)$.
Write $P \cap \ZZ^2 = \{p_0, \dots, p_m\}$.
The polygon $P$ determines a projective toric surface
$\Tc_P \subseteq \PP^m$ as the closed image of the embedding
\begin{eqnarray*}
 (\kk^*)^2   &  \stackrel{\rho}{\rightarrow}  & \PP^m \\
(s_1,s_2) & \mapsto & (\ldots : s^{p_i}  : \ldots)
\end{eqnarray*}
where $i=0, \dots,m$. 
For example, the unit triangle with vertices $(0,1)$, $(1,0)$ 
and $(0,0)$ (or any lattice translate of it) corresponds to $\PP^2$, and any
lattice rectangle gives a Segre-Veronese projective embedding of 
$\PP^1 \times \PP^1$, which are special cases of toric embeddings.

\begin{example}\label{ex:p1p1}
Assume $P$ is the unit square, with $m+1=4$ integer points: 
\[
 p_0 =(0,0), p_1 = (1,0), p_2 = (0,1), p_3 = (1,1).
\]
A polynomial $f_i$ with Newton polytope contained in $P$ looks like
\begin{equation}\label{eq:p1p1}
f_i(s) = a_{(0,0)} + a_{(1,0)} s_1 + a_{(0,1)} s_2 + a_{(1,1)} s_1 s_2.
\end{equation}
We take $4$ new variables $(X_0: X_1 : X_2 : X_3)$  as the homogeneous coordinates
in $\PP^3$. The toric variety $\Tc_P$ is the projective variety in $\PP^3$ cut out
by the relation  
$X_0 X_3 - X_1 X_2=0$. This binomial equation comes from the primitive affine relation $p_0+p_3= p_1+p_2$,
which implies the multiplicative relation
$ s^{p_0} s^{p_3} = s^{p_1}s^{p_2}$ between the monomials with these exponents.
The coordinate ring of $\Tc_P$ is the quotient ring
$\kk[X_0,\dots,X_3]/\langle X_0 X_3 - X_1 X_2\rangle.$
\end{example}

In general, we will call $(X_0: \dots : X_m)$ the homogeneous coordinate variables
in $\PP^m$. Write $P \cap \ZZ^2 = \{p_0, \dots, p_m\}$.
We set one variable $X_i$ for each integer point $p_i$ in $P$ and we record 
multiplicatively (by binomial equations) the affine relations among these points. 
These binomials generate the toric ideal $J_P =J(\Tc_P)$, which defines the
variety $\Tc_P \subset \PP^m$.
To each 
\begin{equation}\label{eq:fisP}
f_i(s) = \sum_{i=0}^m a_{p_i} s^{p_i},
\end{equation}
we associate the homogeneous linear form 
\begin{equation}\label{eq:gis}
 g_i(s) = \sum_{i=0}^m a_{p_i} X_i.
\end{equation}
For instance, in Example~\ref{ex:p1p1}, the polynomial $f_i$ gets translated to
\[
g_i(X_0,\dots, X_3) = a_{(0,0)}X_0 + a_{(1,0)}X_1 + a_{(0,1)}X_2 + a_{(1,1)}X_3, \]
and over $\Tc_P$, we have the relation $X_0 X_3 - X_1X_2=0$.

The rational map $\tilde{f}$ factorizes through $\Tc_P$ in the following way
\begin{equation}\label{diagram2T}
\xymatrix{
 (\kk^*)^2 \ar@{-->}[r]^{\tilde{f}} \ar@{^(->}[d]^{\rho} & \PP^3 \\
\Tc_P \ar@{-->}[ur]_g }
\end{equation}
where $g=(g_0:g_1:g_2:g_3)$ is given by four homogeneous linear polynomials $g_0,g_1,g_2,g_3$ 
in $m+1$ variables. Thus, we have a new homogeneous 
parametrization $g$ of the closed image of $\tilde{f}$ from $\Tc_P$. The polynomials $g_i$ generate
an ideal $I$ in the coordinate ring
\begin{equation}
 \label{eq:A}
A=\kk[X_0,\ldots,X_m] / J_P 
\end{equation}
of $\Tc_P$. This ideal $I$ defines the structure of the base locus in $\Tc_P$.

The embedding $\rho: (\kk^*)^2 \rightarrow \PP^3$ provides a $\ZZ$-grading in the coordinate 
ring $A$ of $\Tc_P$, which is used to study the map $g$ with the tools recalled in
Appendix~\ref{sec:catools}.
 
\subsection{Abstract toric varieties and Cox rings}
Given a lattice polygon $P$, one can also associate to it an abstract compact toric variety $X_P$ that
naturally contains the torus $(\kk^*)^2$ as a dense open set (via the map we call $j$ below), 
adjoining a torus invariant divisor to each edge of $P$. We refer the reader to \cite{CLS11,CLO98}
for the theory and details.

The map $\tilde{f}$ also defines a rational map
$\overline{f}$ that makes the following diagram commutative:
\begin{equation}\label{eq:diagram2}
\xymatrix{
X_P  \ar@{-->}[rd]^{\overline{f}}& \\
(\kk^*)^2  \ar@{^(->}[u]_{j}\ar@{^(->}[d]_{\rho}\ar@{-->}[r]_{\tilde{f}} & \PP^3 \\
 \Tc_P \ar@{-->}[ur]_{g}& }
\end{equation}

\begin{example}\label{ex:p1p1b}
Assume $P$ is the unit square, with $N=4$ edges: the segments $E_1=[(0,0),(1,0)]$,
$E_2=[(0,0),(0,1)]$, $E_3=[(0,1),(1,1)]$, and $E_4 = [(1,0),(1,1)]$. The respective inner normal
vectors $\eta_1=(0,1), \eta_2=(1,0)$, $\eta_3=(0,-1), \eta_4=(-1,0)$ satisfy the linear
relations $\eta_1+\eta_3=0$, $\eta_2+\eta_4=0$, which give rise to two homogeneities.
We introduce four associated variables $Y=(Y_1, \dots, Y_4)$.
A polynomial $f_i$ with Newton polytope $P$ as in~\eqref{eq:p1p1} defines
a bihomogeneous polynomial (in $(Y_1, Y_3)$ and $(Y_2,Y_4)$):
\[
\overline{f}_i(Y) = a_{(0,0)} Y_3 Y_4 + a_{(1,0)} Y_1 Y_4+ a_{(0,1)} Y_2 Y_3 + a_{(1,1)} Y_1 Y_2.
\]
These polynomials $\overline{f}_i$ define the map $\overline{f} = (\overline{f}_0:\dots,\overline{f}_3)$.
\end{example}

The main motivation for this change of perspective comes again from the commutative algebra
results needed for the proof of Theorems~\ref{th:mainalgo} and~\ref{algo:Hirzebruch}. The Cox ring of $X_P$
is endowed with a more natural multigrading, which is finer than the grading obtained via the
embedded projective variety $\Tc_P$. Also, this point of view has an impact in the
computations, as the number of variables to eliminate is smaller (one for each edge of $P$, instead of one
for each lattice point in $P$). In our small example~\ref{ex:p1p1b}, there are four edges and
four lattice points, but the number of edges can remain constant while the number of lattice
points goes to infinity.

\subsection{Precise hypotheses and proof of Theorem~\ref{th:mainalgo}}

In this subsection we detail the precise hypotheses that ensure the validity of 
Theorem~\ref{th:mainalgo} and we prove it, based on results in~\cite{BDD08}.
We first need to recall a few standard definitions from commutative algebra.

\begin{definition}
 Given (nonzero) homogeneous polynomials $(g_0, \dots, g_3)$, 
defining a rational map
$g: \Tc_P \dashrightarrow \PP^3$ as in~\eqref{diagram2T}, a point $p \in \Tc_P$ is a base point of $g$ if it 
 is a common zero set of $g_0, \dots, g_3$, that is, if $p$ is a zero of the ideal $I \subset A$ in $\Tc_P$.

 Let $p \in \Tc_P$ be a base point of $g$. 
 The local ring of $p$ is the ring 
 $A_p = \{ h_1/h_2 , \, h_i \in A, \, h_2(p) \not=0\}$,
with the natural operations induced from $A$  (in turn, naturally induced from the polynomial ring).
Let $I_p$ be the ideal generated by
(the classes of) $g_0, \dots g_3$  in $A_p$.
We say that $p$ is a local complete intersection base point
if $I_p$ can be generated by only $2$ elements. 
We say that $p$ is an almost complete intersection base point if $I_p$ can be
generated with $3$ elements.

We have similar definitions for the map $\tilde{f}: (\kk^*)^2 \dashrightarrow \PP^3$.
\end{definition}

For a given lattice polygon $P$, here are the hypotheses we need in terms of $g$:

\begin{hyp}\label{hyp1}
There are only finitely many 
 base points of $g$ on $\Tc_P$ which are local complete
intersections.
\end{hyp}

We cannot easily find hypotheses on $f$ equivalent to Hypotheses~\ref{hyp1}. 
Given a lattice polygon $P$, an edge $E$ of $P$, and a polynomial $f_i$ 
with $\Nc(f_i)$ contained in $P$ as in~\eqref{eq:fisP}, the restriction  ${f_i}_{|E}$ of $f_i$ 
to $E$ is defined as the sub-sum of the monomials with exponents $p_i$ in $E$. We have the
following partial translation.

\begin{proposition}\label{prop:inequiv}
Let $f,\Tc$ and $g$ be as in~\eqref{diagram2T}.
Then
\begin{enumerate}
\item \label{one} There are only finitely many 
base points of $g$ on $\Tc_P$ if and only if 
there are only finitely many 
isolated base points of $f$ in the torus and for each edge $E$ of $P$, at least one of the restrictions
${f_i}_{|E}$ is nonzero.
\item \label{two} If $g$ has finitely many 
isolated base points on $\Tc_P$ which are local complete
intersections, then the base points of $f$ in the torus are local complete
intersections.
\end{enumerate}
 \end{proposition}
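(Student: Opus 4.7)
The plan is to exploit the torus-orbit stratification of $\Tc_P$ by the faces of $P$ and to translate face restrictions of $f_i$ into restrictions of $g_i$ to torus-invariant subvarieties of $\Tc_P$.

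First I would recall the toric dictionary: the variety $\Tc_P$ decomposes as a disjoint union of torus orbits $\Omega_F$, one for each face $F$ of $P$, whose closures are toric subvarieties of dimension $\dim F$. The open orbit $\Omega_P = \rho((\kk^*)^2)$ is the dense torus; each edge $E$ of $P$ contributes a one-dimensional orbit closure $D_E \cong \PP^1$; each vertex contributes a fixed point. Because the coordinate $X_i$ of $\Tc_P \subset \PP^m$ corresponds to the lattice point $p_i \in P \cap \ZZ^2$, the coordinate $X_i$ vanishes identically on the closure $\overline{\Omega_F}$ exactly when $p_i \notin F$. Consequently, the restriction of the linear form $g_i = \sum_j a_{p_j} X_j$ to $\overline{\Omega_F}$ keeps only the monomials $X_j$ with $p_j \in F$, and under the parametrization $X_j = s^{p_j}$ on the dense subtorus of $\overline{\Omega_F}$, this restriction coincides with the face polynomial ${f_i}_{|F}$ up to a common monomial factor. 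In particular, $g_i$ vanishes identically on $\overline{\Omega_F}$ if and only if ${f_i}_{|F} = 0$.

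For part (1), I would argue the forward direction by contrapositive. If some edge $E$ has ${f_i}_{|E}=0$ for all $i$, then every $g_i$ vanishes on the curve $D_E$, so $D_E$ lies in the base locus of $g$, contradicting finiteness; moreover, base points of $f$ in the torus correspond exactly to base points of $g$ in $\Omega_P$, so they are finite in number whenever those of $g$ are. For the converse, I would decompose the base locus of $g$ according to the orbit stratification: in the open torus $\Omega_P$ the base points are finite by hypothesis on $f$; for each edge $E$ at least one ${f_i}_{|E}$ is nonzero, so $g_i$ restricts to a nonzero linear form on $D_E \cong \PP^1$ with only finitely many zeros; and the vertex strata are automatically finite. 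Summing these finitely many contributions yields the conclusion.

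For part (2), I would use the fact that $\rho:(\kk^*)^2 \hookrightarrow \Tc_P$ is an open immersion onto $\Omega_P$. A torus base point $p$ of $f$ corresponds to a base point $\rho(p)$ of $g$ lying in $\Omega_P$, and the local ring $A_{\rho(p)}$ is canonically identified with the local ring of $(\kk^*)^2$ at $p$. Under this identification the germ of $g_i$ agrees with that of $f_i$ up to a Laurent-monomial unit, so the two local ideals coincide. Hence, if $I_{\rho(p)}$ can be generated by two elements in $A_{\rho(p)}$, the same is true for the local ideal of $f_0,\dots,f_3$ at $p$, which is the local complete intersection condition. The main obstacle is making the toric dictionary between face subsums ${f_i}_{|F}$ and restrictions $g_i|_{\overline{\Omega_F}}$ fully rigorous, in particular the assertion that they agree up to a common monomial factor on the dense subtorus of $\overline{\Omega_F}$; this can be pinned down using the explicit affine charts of $\Tc_P$ associated to the vertices of $P$. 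Once this orbit-by-orbit correspondence is in place, both implications in the proposition reduce to the case analyses sketched above.
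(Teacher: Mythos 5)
Your proof is correct and follows essentially the same approach as the paper: stratify $\Tc_P$ by torus orbits indexed by the faces of $P$, identify the restriction of the $g_i$ to each orbit closure with the corresponding face polynomial $f_{i|F}$ (up to a monomial unit), and use that $\rho$ is an open immersion onto the dense torus to transfer both the base-point correspondence and the local-ideal isomorphism needed for the complete-intersection statement. The paper's proof is terser but rests on exactly this orbit-by-orbit case analysis.
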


\begin{proof}
 The map $\rho$ defines an isomorphism between $(\kk^*)^2$ and its image (which is an open dense subset
 of $\Tc_P$), sending a base point
 $q$ of $\tilde{f}$ (that is, a point where $f_0(q)=\dots =f_3(q)=0$)
to a base point $p=\rho(q)$ of $g$, and reciprocally, any base point $p$ of $g$ in $\rho(\kk)^2$
is the image of a base point $q$ of $f$.
Moreover, we have an isomorphism between the ideal generated by $f_0, \dots, f_3$ at $q$ and $I_p$.
Any base point of $g$ outside the image of $\rho$ cannot be seen in the torus. But these points are either the
fixed torus points corresponding to the finitely many vertices of $P$, or they lie in the torus
of the toric divisor $D_E$ in $\Tc_P$ associated to an edge $E$ of $P$. As $D_E$ has dimension $1$,
there are finitely many solutions as long as at least one of the ${f_i}_{|E}$ is nonzero.
  \end{proof}
 
 The following example shows that the converse of item~\eqref{two} in Proposition~\ref{prop:inequiv}
does not hold.

\begin{example}\label{ex:nonlaci}
Consider the parametrization with $6$ monomials: $(f_0,f_1,f_2,f_3)=(st^6+2,st^5-3st^3,st^4+5s^2t^6,2+s^2t^6)$.
Then, $f$ has no base points in the torus. But if we consider their standard homogenizations to degree $8$
polynomials (that is, we take $P$ equal to $8$ times the standard unit simplex in the plane), the corresponding
homogeneous polynomials $g_0, \dots,g_3$ have one base point ``at infinity'' 
which is not even an almost locally complete intersection.
\end{example}

We now give the proof of Theorem~\ref{th:mainalgo}.

\begin{proof}[Proof of Theorem~\ref{th:mainalgo}]
Given  $(f_0(s), f_1(s), f_2(s), f_3(s))$ with no common factor and Newton polytopes contained in $P$, the 
corresponding polynomials $g_i$ associated to $f_i$ are homogeneous of degree $d=1$ and satisfy Hypotheses~\ref{hyp1}.

From \cite[Cor.\ 14]{BDD08} one has that for $d=1$,  the matrix of coefficients of a $K$-basis of 
the module of Syzygies of $g$ in any degree $\nu\geq 2$ with respect to a $K$-basis of the graded piece  $A_\nu$
of $A$, is a  matrix representation for the closure of the image of $f$, which equals the closure of the image
of $g$. 

In particular, we can take $\nu=2$. In STEP 1 of Algorithm~\ref{algo:mainalgo}, the syzygies 
$(h^{(j)}_0,\dots,h^{(j)}_3)$ for $j = 1, \dots, N$  with $\Nc(h^{(j)}_i) \subset 2 P$ for all $i,j$, provide a $\kk$-basis of 
the module of syzygies of $g$ in degree $2$, since classes of monomials of degree $2$ in $A$ correspond to 
monomials in the $s$ variables with exponents in $2P$.

Equality~\eqref{eq:deg} follows from Theorem 13 in~\cite{BDD08}.
\end{proof}

In principle, given a rational map $\tilde{f}$, we could take any lattice polygon $P$
containing the union $\Nc(f)$  of Newton polytopes of $f_0, \dots, f_3$. 
Note that the hypothesis that $f$ is generically finite implies 
that $\Nc(f)$ is two-dimensional. Taking $P$ strictly containing $\Nc(f)$
will increase the number of exponents and will in general produce bad behaviour
of $g$ at the fixed points in $\Tc_P$ corresponding to the vertices of $P$ which do not
lie in $\Nc(f)$. 

\begin{remark}\label{rem:whatifnot}
We can check algorithmically if 
$f_0, \dots, f_3$ have
finitely many solutions over $(\overline{\kk}^*)^2$ and if for any edge $E$ at least one of the restrictions
${f_i}_{|E}$ is nonzero. So, by Proposition~\ref{prop:inequiv},   
we can check whether $g$ has finitely many base points in 
$\Tc_P$. 

Assume the dimension of the base locus of $g$ is zero.
As we remarked in Example~\ref{ex:nonlaci}, even if we could check the 
local behavior of the base points of $f$
in the torus, this would not imply the satisfiability of Hypotheses~\ref{hyp1}.
But what if we don't check this and run Algorithm~\ref{algo:mainalgo}?  \dots
nothing bad!

We then check whether the output matrix $M$ in has full rank.

\begin{itemize}
\item If the rank of $M$ is not maximal, then there is at least one base point $p$ of $g$
which is not an almost local complete intersection. 
In this case, we cannot get the implicit equation, but we get a 
certificate of the bad behavior of the base locus (without
computing it).

\item If the rank of $M$ is maximal, it may happen that the its rank
drops when evaluated at points outside $\Sc$ due
to the existence of an almost complete intersection but non complete intersection base point. In this case, 
the greatest common divisor of the maximal minors of $M$ would have irreducible 
factors other than the implicit equation $F$. In fact, the existence of other irreducible 
factors is equivalent to the fact that there exists a base point which is
an almost local complete intersection but not a local complete intersection.

\end{itemize}
\end{remark}

\subsection{The hypotheses and proof of Theorem~\ref{th:Hirzebruch}}
In this subsection we detail the precise hypotheses that ensure the validity of 
Theorem~\ref{th:Hirzebruch} and we prove it, based on results in~\cite{Bot10}.

Given $P$ and $f$, here are the hypotheses we need in terms of the map $\overline{f}$
in~\eqref{eq:diagram2}:

\begin{hyp}\label{hyp1XP}
There are only finitely many 
base points of $\overline{f}$ on $X_P$ which are local complete
intersections.
\end{hyp}

Again, we cannot easily find hypotheses on $f$ equivalent to Hypotheses~\ref{hyp1},
since good algebraic behaviour of the base points in the torus does not imply the
same behaviour for the possible base points of $\overline{f}$ at the invariant
divisors in $X_P$ associated with the edges of $P$.

\begin{proposition}\label{prop:inequivXP}
Let $f,X_P$ and $\overline{f}$ as in~\ref{eq:diagram2}.
Then
\begin{enumerate}
 \item \label{oneb} There are only finitely many 
isolated base points of $\overline{f}$ on $X_P$ if and only if 
there are only finitely many 
isolated base points of $f$ in the torus and for each edge of $P$, at least one of the restrictions
of the $f_i$ is nonzero.
\item \label{twob} If $\overline{f}$ has finitely many 
base points on $X_P$ which are local complete
intersections, then the base points of $f$ in the torus are local complete
intersections.
\end{enumerate}
 \end{proposition}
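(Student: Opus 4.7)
The plan is to mimic the proof of Proposition~\ref{prop:inequiv}, replacing the role of the embedded toric variety $\Tc_P$ by the abstract toric variety $X_P$, and the embedding $\rho$ by the inclusion $j:(\kk^*)^2\hookrightarrow X_P$. The key structural input is the toric stratification
\[
X_P \, = \, j((\kk^*)^2) \; \sqcup \; \bigsqcup_{E \text{ edge of } P} D_E^\circ \; \sqcup \; \{\text{torus-fixed points of } X_P\},
\]
where each $D_E$ is the torus-invariant prime divisor associated with the edge $E$ and $D_E^\circ$ denotes its dense torus. I will analyze the base locus of $\overline{f}$ stratum by stratum.

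First, for both parts I will exploit that $j$ is an open immersion inducing an isomorphism of local rings $\OO_{X_P,j(q)}\cong \OO_{(\kk^*)^2,q}$ at every torus point $q$. Under this isomorphism, the ideal generated by $\overline{f}_0,\dots,\overline{f}_3$ in $\OO_{X_P,j(q)}$ corresponds to the ideal generated by $f_0,\dots,f_3$ in $\OO_{(\kk^*)^2,q}$ (after trivializing the line bundle on which $\overline{f}$ is a global section, which just multiplies by a unit locally). Hence base points of $\overline{f}$ lying in the dense torus correspond bijectively to base points of $f$ in the torus, and $p=j(q)$ is a local complete intersection base point of $\overline{f}$ if and only if $q$ is a local complete intersection base point of $f$. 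This immediately yields~\eqref{twob}, since every base point of $f$ in the torus comes from such a $p$.

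For part~\eqref{oneb}, the remaining contribution to the base locus comes from the boundary $X_P\setminus j((\kk^*)^2)$. The fixed points contribute only finitely many potential base points and can be ignored for the dimensional question. Along each edge $E$, after trivializing the line bundle near $D_E^\circ$ (working in Cox coordinates $Y$, one divides by appropriate powers of the variable associated to $E$), the restriction $\overline{f}_i|_{D_E}$ of the trivialized section is (up to a nonzero monomial in the remaining Cox variables, supported on the vertices of $E$) the polynomial ${f_i}|_E$. Therefore, either all four restrictions ${f_i}|_E$ vanish identically --- in which case $D_E$ is entirely contained in the base locus and $\overline{f}$ has an infinite (one-dimensional) base locus --- or at least one is nonzero, in which case the base points of $\overline{f}$ lying in $D_E^\circ$ are cut out by a nonzero function on the one-dimensional torus $D_E^\circ$ and hence form a finite set. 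Combining the three strata (torus, open divisors, fixed points) proves both implications in~\eqref{oneb}.

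The main obstacle I expect is the careful bookkeeping in the second step: the sections $\overline{f}_i$ live in a nontrivial line bundle on $X_P$ (encoded by a polytope translation of $P$), so the identification of $\overline{f}_i|_{D_E}$ with ${f_i}|_E$ requires choosing local trivializations along each $D_E$ and tracking how the Cox-ring monomial factors attached to the other edges behave --- in particular verifying that they are units on $D_E^\circ$ and therefore do not contribute spurious zeros or spurious nonzeros. Once this local description is in place, the rest of the argument is the direct analogue of the proof of Proposition~\ref{prop:inequiv}, and the example (a Laurent-polynomial variant of Example~\ref{ex:nonlaci}, now viewed in $X_P$) showing that the converse of~\eqref{twob} fails can be recycled verbatim.
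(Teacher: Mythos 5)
Your proof is correct and follows exactly the route the paper intends: the paper omits a proof of Proposition~\ref{prop:inequivXP} precisely because it is the direct analogue of the stated proof of Proposition~\ref{prop:inequiv}, which is what you reproduce. You additionally fill in the one point of bookkeeping that genuinely differs in the abstract setting — that the $\overline{f}_i$ are sections of a line bundle on $X_P$, so one must trivialize along each $D_E^\circ$ and check that the Cox monomial factors attached to the other edges are units there — and this is handled correctly.
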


We next give the proof of Theorem~\ref{th:Hirzebruch}.

\begin{proof}[Proof of Theorem~\ref{th:Hirzebruch}]
By hypothesis, there are only finitely many isolated base 
points of 
$\overline{f}$ on the toric variety $X_P$ associated with $P:=H_{a,b,n}$,  which are local complete intersections. 
There are four primitive inner normal vectors of $P$: $\eta_1= (0,1), \eta_2 = (0,1), \eta_3 = (-1,0), \eta_4 = (-1,n)$,
which satisfy the linear relations $\eta_3 = - \eta_1, \eta_4 = n \eta_2 - \eta_1$. So any multidegree $\nu$ can be described
by a ``bidegree'' $(\nu_1, \nu_2)$ given by the degrees with respect to the first normals and which fixes (up to translation) the 
associated polytope $P_\nu$ with the same normals as $P$.
Thus, by \cite[Thm.\ 5.5]{Bot10} the matrix of 
coefficients of a $K$-basis of 
the module of Syzygies of $\overline{f}$ in any bidegree $(\nu_1,\nu_2)$ with $\nu_1\geq 2a-1$ and $\nu_2\geq b-1$ 
(or $\nu_1 \geq a -1, \nu_2 \ge 2 b-1$).
\footnote{The choice of the 
bidegree is less obvious than in the graded case. For further details, see definition of $\mathfrak R_B(\gamma)$ in \cite[Thm.\ 
5.5]{Bot10}, or the analysis of the bidegree in the standard bigraded case in \cite[Sec.\ 7.1]{Bot10}} 
with respect to a $K$-basis 
of the bigraded piece ${(\nu_1,\nu_2)}$ of the Cox ring of $X_P$, 
is a  matrix representation for the closure of the image 
$\Sc$ of $f$ (which equals the closure of the image of $\overline{f}$). 

Taking $(\nu_1,\nu_2)=(2a-1,b-1)$ one has that in STEP 1 a basis of syzygies $(h^{(j)}_0,\dots,h^{(j)}_3)$, for $j = 1, \dots, 
N$ with all $\Nc(h^{(j)}_i)$ with support in the
quadrilateral 
$H_{2a-1,b-1,n}$, provides a $\kk$-basis of the module of 
syzygies of $\overline{f}$ in bidegree $(2a-1,b-1)$. Hence, the matrix $M$ 
of coefficients of such syzygies obtained 
in STEP 2 gives a representation matrix for $\Sc$.

Equality~\eqref{eq:deg2} also follows from Theorem~5.5 in~\cite{Bot10}. 
\end{proof}


\section{Examples}\label{sec:examples}

This section consists of four examples which highlight the usefulness of our approach.
Example \ref{Ex:ToricNotP2} is taken from 
a case studied in \cite[Ex.\ $18$]{BDD08} 
of a sparse parametrization  where projective implicitization does not work due to the nature of the base locus of the 
map, but Algorithm \ref{algo:mainalgo} is applicable with a right choice of polygon $P$ read from the monomials of
the input polynomials. In Example 
\ref{Ex:HighDeg} we show how the method in Algorithm \ref{algo:mainalgo} works for a parametrization given by fewnomials of high 
degree, where classical resultant tools fail due to the computational complexity. In Example~\ref{Ex:BPtorus},
classical resultant tools  fail because of the existence of a base point in the torus.
Finally, in Example \ref{Ex:WithoutEmb} we compare the methods in Algorithms \ref{algo:mainalgo} and \ref{algo:Hirzebruch}.

\subsection{A very sparse parametrization}\label{Ex:ToricNotP2}

Consider the parametrization with $6$ monomials given in \cite[Ex.\ $18$]{BDD08}:
$(f_0,f_1,f_2,f_3)=(st^6+2,st^5-3st^3,st^4+5s^2t^6,2+s^2t^6)$.
The matrix representation can be computed using the package \textit{MatrixRepToric.m2} 
\cite{BD10}
in the computer algebra software \textit{Macaulay2} \cite{M2}.

One first defines the map $f$ given by polynomials in the ring $S= \QQ[s,t]$
(note that for easiness of typing, we call the variables $(s,t)$ instead of $(s_1, s_2)$):

\begin{verbatim}
S = QQ[s,t];
f = {s*t^6+2, s*t^5-3*s*t^3, s*t^4+5*s^2*t^6, 2+s^2*t^6};
\end{verbatim}

\noindent\begin{minipage}{8cm}
Consider $P$ the lattice triangle with vertices $(0,0), (1,6)$ and $(2,6)$.

\medskip

One can compute $P$ by the command:
 \begin{verbatim}
P = polynomialsToPolytope L
 \end{verbatim}

 The lattice-points of $P$ can be computed using the auxiliary Macaulay2 package \texttt{Polyhedra} as:
 \begin{verbatim}
latticePoints P
\end{verbatim}
 
 \end{minipage}
\begin{minipage}{3cm}
 \begin{center}
  \includegraphics[scale=.6]{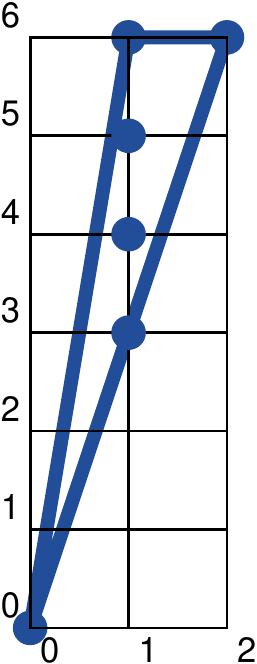}
\end{center}

\end{minipage}

By taking syzygies with support in $2P$, on gets a matrix representation of size $17 \times 34$.
The greatest common divisor of the $17$-minors of this matrix is the homogeneous implicit equation of the surface:
\begin{eqnarray*}
 & & 2809T_0^2T_1^4 + 124002T_1^6 - 5618T_0^3T_1^2T_2 + 66816T_0T_1^4T_2 +
2809T_0^4T_2^2\\
& &- 50580T_0^2T_1^2T_2^2  + 86976T_1^4 T_2^2 + 212T_0^3T_2^3  - 14210T_0T_1^2T_2^3  + 3078T_0^2 T_2^4 \\
& & + 13632T_1^2 T_2^4  + 116T_0T_2^5 + 841T_2^6  + 14045T_0^3 T_1^2 T_3 - 169849T_0T_1^4 T_3 \\
& & -14045T_0^4 T_2T_3 + 261327T_0^2 T_1^2 T_2T_3 - 468288T_1^4 T_2T_3 - 7208T_0^3 T_2^2 T_3 \\
& & + 157155T_0T_1^2 T_2^3 T_3 - 31098T_0^2 T_2^3 T_3 - 129215T_1^2 T_2^3 T_3 - 4528T_0T_2^4 T_3  \\
& & - 12673T_2^5 T_3 - 16695T_0^2 T_1^2 T_3^2  + 169600T_1^4 T_3^2  +
30740T_0^3 T_2T_3^2 \\
& & - 433384T_0T_1^2 T_2T_3^2 + 82434T_0^2 T_2^2 T_3^2  + 269745T_1^2 T_2^2 T_3^2  + 36696T_0T_2^3 T_3^2 \\
& &  + 63946T_2^4 T_3^2  + 2775T_0T_1^2 T_3^3  - 19470T_0^2 T_2T_3^4  + 177675T_1^2 T_2T_3^3  \\ 
& & - 85360T_0T_2^2 T_3^3  - 109490T_2^3 T_3^3  - 125T_1^2 T_3^4  + 2900T_0T_2T_3^4   \\
& & + 7325T_2^2 T_3^4  - 125T_2T_3^5 
\end{eqnarray*}
We can set $T_0=1$ to get the affine equation.

The map $g$ is computed with the following command:
\begin{verbatim}
g = teToricRationalMap f;
\end{verbatim}

The  matrix representation and the implicit equation are computed as follows:
\begin{verbatim}
M = representationMatrix (teToricRationalMap f,2);
implicitEq (L,2)
\end{verbatim}
Notice that the $2$ as second parameter in the computation of $\tt M$ is precisely the $2$ in the support
$2P$ of the syzygies. 
For a deeper understanding of the choice of the parameter $2$, see Appendix \ref{sec:catools}.

In the language of Section \ref{sec:tgtools}, the coordinate ring associated to $\Tc_P$ is $A=\kk[X_0,\ldots,X_5]/J_P$, 
where $J_P=(X_3^2-X_2X_4,X_2X_3-X_1X_4, X_2^2-X_1X_3,X_1^2-X_0X_5)$. The parametrization $g$ over $\Tc_P$ is given by 
$(g_0, g_1,g_2,g_3)=(2X_0+X_5, 2X_0+X_4,-3X_1+X_3, X_2+5X_5)$.
This matrix can be computed as the right-most map of the $\nu_0=2d =2$ strand 
of a graded complex as explained in Appendix~\ref{sec:catools}. 
The method fails over $\PP^2$  (i.e $P =$ the triangle with vertices $(0,0),(8,0),(0,8)$) due to the nature of the base 
locus. 
One can see this just by computing a  matrix representation and verifying that it is not full-ranked.

\subsection{Fewnomials with high degree.}\label{Ex:HighDeg}
This example contributes to show how the method works fine for high degree fewnomials involved in the parametrization. 

Consider the polynomials $(f_0,f_1,f_2,f_3)=(1 + st + s^{37}, s^7 + s^{47}, 
s^{37} + s^{59}, s^{61})$. Let
$f:\CC^2\dto \CC^3$ be the parametrization that maps 
$(s,t)\mapsto (f_1/f_0,f_2/f_0,f_3/f_0)(s,t)$. The implicit equation of the closure of the image of $f$
could be computed by 
eliminating the variables $(s,t)$ as follows (using general elimination procedures based on Gr\"obner bases in Macaulay2):

\begin{verbatim}
R = QQ[s,t, x, y, z, w]
f0 = 1 + s*t + s^37; f1 = s^7 + s^47; f2 = s^37 + s^59; f3 = s^61
L1 = x*f1 - y*f0; L2 = x*f2 - z*f0; L3 = x*f3 - w*f0 
eliminate ({s,t}, ideal(L1,L2,L3))
\end{verbatim}
In a 2014 standard desktop computer this routine does not end after one hour of  computation. We also
tried the well implemented {\sl eliminate} command in Singular \cite{Sing}, 
but with the same lack of answer after a couple of hours of
computation.

By homogenizing with an auxiliary variable $u$ we could try eliminate $(s,t,u)$ using 
Macaulay resultant methods, but we easily figure out that the homogenized forms $\overline{L}_1,\overline{L}_2,
\overline{L}_3$  
vanish identically over the point $(s,t,u)=(0:1:0)$. This implies in particular that the 
Macaulay resultant $\Res_{(s,t,u)}(\overline{L}_1,\overline{L}_2,\overline{L}_3)$ is identically zero.

Finally one can compute the implicit equation (and matrix representation) 
by implementing the implicitization techniques described in this article. 
A not very efficient (but efficient enough) routine in \cite{Bot10M2} 
gives the toric map $g$ in less than $2$ minutes and the desired 
 matrix representation $M$ in less than one more minute.

\subsection{Fewnomials with base points in the torus.}\label{Ex:BPtorus}
This example shows a case where classical resultants cannot be applied to compute the implicit equation, 
but the techniques in the paper can. Anyway, we recall that the aim of the matrix representations is to provide a better and more 
complete tool for representing a surface, and hence, the point presented with this example is just one extra advantage of the 
method.

Consider the following parametrization $(f_0,f_1,f_2,f_3)=(1-ts,-ts^{36}+1,-t(-s^{38}+t),s^{37}-t)$
given by four polynomials that define a parametrization $f:\CC^2\dto \CC^3$ that maps 
$(s,t)\mapsto (f_1/f_0,f_2/f_0,f_3/f_0)(s,t)$. The implicit equation cannot be computed by 
eliminating the variables $(s,t)$ with classical resultants, because the point $(1,1)$ is in the base locus.
However, this fact does not imply any problem in Algorithm~\ref{algo:mainalgo}.

With Algorithm~\ref{algo:Hirzebruch} with a rectangular $P=H_{38,2,0}$, it takes 0.058 seconds in a standard 2014 desktop 
computer to get a matrix representation $M$ of the closure of the image of $f$. The size of $M$
is  $152 \times 194$ and the gcd of its maximal minors has degree $110$.

\subsection{Comparison with and without embedding}\label{Ex:WithoutEmb}
While Algorithm~\ref{algo:mainalgo} holds with great generality, when dealing with
polynomials with rectangular support (which can be interpreted as bihomogeneous
polynomials), Algorithm~\ref{algo:Hirzebruch} provides a smaller matrix representation.

Consider the following four polynomials $f_0,\hdots,f_3$:

$f_0 = 3s_1^2s_2-2s_1s_2^2-s_1^2+s_1s_2-3s_1-s_2+4-s_2^2$,

$f_1 = 3s_1^2s_2-s_1^2-3s_1s_2-s_1+s_2+s_2^2+s_2^2+s_1^2s_2^2$,

$f_2 = 2s_1^2s_2^2-3s_1^2s_2-s_1^2+s_1s_2+3s_1-3s_2+2-s_2^2$, and 

$f_3 = 2s_1^2s_2^2-3s_1^2s_2-2s_1s_2^2+s_1^2+5s_1s_2-3s_1-3s_2+4-s_2^2$.

\noindent The Newton polytope $P=\Nc(f)$ is the rectangle $\{(x,y) \, : \, 0 \le x,y \le 2)\}$. 
If we apply Algorithm \ref{algo:mainalgo} (as we illustrated in Example \ref{example:IllustAlg}),
we obtain a  matrix representation $M$ of size $25\times 51$.

The associated toric variety $X_P$ can be identified with the $(2,2)$ Segre-Veronese embedding of  $\PP^1\times \PP^1$ 
in $\PP^{8}$ (see \cite{BD07,BDD08,Bot09}).

By means of Algorithm \ref{algo:Hirzebruch} we get a  matrix representation $M$ from a basis of linear syzygies
of bidegree $(2.2-1,2 -1)=(3,1)$. This matrix representation can be computed using
the algorithm developed in \cite{Bot10M2} and implemented in M2, as the matrix $M_\nu$ for bidegree 
$\nu=(3,1)$, and one obtains a square $8\times 8$-matrix.
Its determinant equals the implicit equation $F$:
{\footnotesize \begin{verbatim}
               8             7               6 2            5 3           4 4       
  F = 63569053X  - 159051916X X  + 175350068X X  - 82733240X X  + 2363584X X  +  ...
               0             0 1             0 1            0 1           0 1   
\end{verbatim}}
\noindent Notice that the matrix $M_{(3,1)}$ is considerably smaller than the $25\times 51$-matrix $M$
because instead of considering syzygies with support in the rectangle
$2P =\{(x,y) \, : \, 0 \le x,y \le 4 \}$, the syzygies are taken with support in the smaller rectangle
$\{(x,y) \, : \, 0 \le x \le 3, \, 0 \le y \le 1 \}$.



\renewcommand{\thesection}{A}
\setcounter{theorem}{0}

\section{Appendix: Commutative algebra tools}\label{sec:catools}

This appendix is devoted to highlight the tools of homological commutative algebra and algebraic
geometry that are
needed to justify the validity of Algorithms~\ref{algo:mainalgo} and~\ref{algo:Hirzebruch}, 
and to explain the choice of the support
of the syzygies in STEP 2 which define the matrix representation of the parametrized surface.

 Given $P$, the toric embedding $\rho:(\kk^*)^2 \rightarrow \PP^m$ in Section \ref{ssec:toricemb} provides a $\ZZ$-grading 
 in the coordinate ring $A=\kk[X_0,\hdots,X_m]/J_P$ in~\eqref{eq:A} of $\Tc_P$, which can be used to study the map $g$ in 
Diagram \ref{diagram2T} 
and its associated Rees and symmetric algebras, denoted by $\RIR$ and $\SIR$ respectively.
Notice also that the graded ring $A$ coincides with the affine semigroup ring of the lattice polytope $P$, 
which is {\sl Cohen-Macaulay} and normal because $P$ has dimension $2$.

The grading in $A$ plays a key role in the elimination process. The  matrix representation $M$ of Section 
\ref{ssec:general} 
depends on a choice of degree $\nu$, as was shown in Section \ref{ss:curves} for the 
case of curves. The reason why $\nu$ needs to be considered is rather technical, and a complete explanation involves sheaf 
cohomology.
From a more naive point of view, the implicit 
equation of the surface $\Sc := \overline{\im(g)}$ is written in the variables $T=(T_0,\dots,T_3)$ but 
depends on the algebraic relations among the polynomials $g_i$, which lie in $A$. Fixing a degree $\nu$ in $A$  can be thought as 
\textit{eliminating the variables of $A$, by hiding them in the monomial basis of the graded piece $A_\nu$}. 
In turn, recall that the variables $X=(X_0, \dots,X_m)$ in $A$ code monomials in the original $s$ variables, with exponents in the
lattice points in $P$.

More geometrically, consider the graph variety $\Gamma$ of $g$  where both group of variables $X$ and $T$
are involved. The  elimination process 
can be understood geometrically as projecting $\Gamma$ via $\pi_2$ in the following diagram 

\begin{equation*}\label{diagram2Tb}
\xymatrix{
\Gamma\subset\Tc_P \times \PP^3 \ar[dr]^{\pi_2} \ar[d]^{\pi_1} &  \\
\Tc_P \ar@{-->}[r]_g &\PP^3 \supset \overline{\im(g)} = \overline{\im(f)}.}
\end{equation*}

In the correspondence between 
subvarieties of $\Tc_P \times \PP^3$ and bigraded algebras, the inclusion of the graph 
$\Gamma\subset\Tc_P \times \PP^3$ 
corresponds 
to the surjection $A[T_0, T_1, T_2, T_3] \twoheadrightarrow \RIR$, the Rees algebra of the ideal $I$ generated by
$g_0, \dots,g_3$ over the 
coordinate ring $A$. The projection 
$\pi_2(\Gamma)$ corresponds to eliminating the variables $X_i$ of $\RIR$.
We denote by $I({\pi_2(\Gamma)})$ the defining ideal of $\pi_2(\Gamma)\subset  \PP^3 $.

How to eliminate the $X$ variables from $\RIR$ algebraically?
A standard procedure is to find a free graded presentation $F_1\nto{M}{\lto} F_2\to \RIR\to 0$ and a  degree $\nu$
(in the $X$ variables) such that the Fitting ideal
$\Fitt0(M_\nu)$ 
generated by the maximal minors of $M_\nu$ (in the graded strand $(F_1)_\nu\nto{M_\nu}{\lto} (F_2)_\nu\to \RIR_\nu\to 0$) 
computes $I({\pi_2(\Gamma)})$.
It happens that no universal way to compute such a free presentation is available, 
so the idea is to ``approximate'' $\RIR$ by the (hopefully) similar  
algebra $\SIR$ that admits such a universal resolution. These resolutions of the symmetric algebras
are known as approximation complexes, they were introduced in~\cite{HSV1,HSV2} 
and their application on elimination theory was 
done in~\cite{BuJo03, Buse1}.
The last map of the approximation complex is the following in our case:
\[
 Z_1[T_0,T_1,T_2,T_3]\nto{M'}{\lto}A[T_0,T_1,T_2,T_3]\to \SIR\to 0,
\]
where $Z_1=\llave{(a_0,a_1,a_2,a_3)\in A^4 : \sum a_ig_i=0}$ 
is the first module of syzygies of 
$g_0,g_1,g_2,g_3$ and $M'= [T_0\ T_1\ T_2\ T_3]^t$, that is,
\[
 M'\cdot (a_0,a_1,a_2,a_3):= \sum a_i T_i.
\]
The cokernel of $M'$ is 
$ \SIR =A[T_0,T_1,T_2,T_3]/ J$,
where 
\[J:=\{\sum a_i T_i : a_i\in A[T_0,T_1,T_2,T_3]\mbox{ and } \sum a_ig_i=0\}.\]	
We can recognize the origin of the linear forms $L_i$ in STEP 2 of our algorithms!

But there is a remaining question: which is the relation between $\RIR$ and $\SIR$? 
 Which variety does $\SIR$ define? Can we use  $\Fitt0(M'_\nu)$ to compute $\Ic_{\pi_2(\Gamma)}$ for 
some $\nu$?
The answer is that in case there are finitely many base points and for each base point $p$, the local $I_p$
is a local complete intersection, then
 $\RIR$ and $\SIR$ define the same scheme in $\Tc\times \AA^4$ (thus, in $\Tc\times \PP^3$). 
As $\RIR$ is $\mm$-torsion free, both algebras coincide module the $\mm$-torsion of $\SIR$, 
$\RIR\cong\SIR/H^0_\mm(\SIR)$, where $\mm$ is the maximal ideal generated by
$X_0, \dots,X_m$.  Thus if $I$ is a local complete intersection and $\nu$ is such that $H^0_\mm(\SIR)_\nu=0$, 
then $\RIR_\nu\cong\SIR_\nu$. This happens for $\nu\geq \nu_0:=2$ by Theorem~11 in~\cite{BDD08} (in fact,
that result also proves Theorem~\ref{th:refinement} as remarked before).
In particular, in this case, $\Fitt0(M'_\nu)$ computes $\Ic_{\pi_2(\Gamma)}$ for any $\nu\geq \nu_0$.

In fact, the condition of $I$ being a local complete intersection can be relaxed
to the condition of being locally an almost complete intersection.
(i.e.\ $I_p$ can be generated by $3$ elements, for any $p$ in the finite set $V(I)$).
In this case, $\dim (\SIR)=\dim (\RIR)$.
Since there is always a surjective map $\SIR\twoheadrightarrow\RIR$ 
then $ V(\SIR) = \Gamma \cup U$, where $U$ has the same dimension. In particular, $\pi_2(\Gamma) \cup \pi_2(U) \pi_2(V(\SIR)$.
For $\nu\geq \nu_0$, $\SIR$ is $\mm$-torsion free, and $\Fitt0(M'_\nu)$ 
computes $I({\pi_2(V(\SIR)})$ for any $\nu\geq \nu_0$. So, the gcd $H$ of the maximal minors of $M'\nu$ contains 
has the homogenization of the implicit equation $F$ as a factor.

In the bigraded case of Hirzebruch surfaces, in particular in the standard bigraded case, the basic
ideas are similar, but new technical details have to be managed in order to determine the bidegrees
for which the torsion of the symmetric algebra vanishes. We refer the reader to~\cite{Bot10} for
the details and proofs.

\section*{Acknowledgments} We are grateful to the organizers of the
European Marie Curie ITN SAGA network for the invitation to deliver 
a mini-course at the Fall School Shapes, Geometry, and Algebra
held on October 2010, at Kolympari, Greece, which was the starting point of this article.
We thank Laurent Bus\'e for interesting conversations.



\def\cprime{$'$}

\end{document}